\newtheorem{defin}{Definition}[section]
\newtheorem{thm}[defin]{Theorem}
\newtheorem{prop}[defin]{Proposition}
\newtheorem{lemma}[defin]{Lemma}
\newtheorem{rem}[defin]{Remark}
\numberwithin{equation}{section}
\DeclareMathOperator{\supp}{\mathrm{supp}}
\DeclareMathOperator{\vers}{\mathrm{vers}}
\def\R {\mathbb{R}}       %%% Real numbers
\def\N {\mathbb{N}}       %%% Natural
\def\Z {\mathbb{Z}}
\def\Rn {\mathbb{R}^{n}}
\def\cC{{\mathcal C}}
\def\cF{{\mathcal T}}
\def\cN{{\mathcal N}}
\def\cR{{\mathcal R}}
\def\cS{{\mathcal S}}
\def\cV{{\mathcal V}}
\def\1{\lambda}
\def\2{\Sigma_{2}}
\def\<{{\langle}}
\def\>{{\rangle}}
\def\norm#1{{\<} #1 {\>}}
\def\S{{\Sigma}}
\def\va{\varphi}
\def\Rn{{\R^n}}
\def\jp#1{{\langle} #1 {\rangle}}
\newcommand{\tp}{\widetilde{\psi}}
\title{Global $L^p$ continuity of Fourier integral operators}
\author[Sandro Coriasco]{Sandro Coriasco}
\address{
  Sandro Coriasco:
  \endgraf
  Department of Mathematics
  \endgraf
  University of Torino
  \endgraf
   V. C. Alberto, n. 10 
  \endgraf
   Torino I-10126
  \endgraf
  Italy
  \endgraf
  {\it E-mail address} {\rm sandro.coriasco@unito.it}
  }  
\author[Michael Ruzhansky]{Michael Ruzhansky}
\address{
  Michael Ruzhansky:
  \endgraf
  Department of Mathematics
  \endgraf
  Imperial College London
  \endgraf
  180 Queen's Gate, London SW7 2AZ 
  \endgraf
  United Kingdom
  \endgraf
  {\it E-mail address} {\rm m.ruzhansky@imperial.ac.uk}
  }
\thanks{The second
 author was supported in part by the EPSRC
 grants EP/E062873/1 and EP/G007233/1.}
\date{\today}
\subjclass{Primary 35S30; Secondary 42B30, 46E30, 47B34}
\keywords{Fourier integral operators, global 
$L^p(\Rn)$ boundedness}
\begin{document}

%\tableofcontents

\begin{abstract}
In this paper we establish global $L^p$ regularity properties
of Fourier integral operators. The orders of decay of the
amplitude are determined for operators to be bounded
on $L^p(\Rn)$, $1<p<\infty$, as well as to be bounded 
from Hardy space $H^1(\Rn)$ to $L^1(\Rn)$. The obtained
results extend local $L^p$ regularity
properties of Fourier integral operators
established by Seeger, Sogge and Stein (1991) as
well as global $L^2(\Rn)$
results of Asada and Fujiwara (1978) and
Ruzhansky and Sugimoto (2006), to the global
setting of $L^p(\Rn)$. Global boundedness
in weighted Sobolev spaces $W^{\sigma,p}_s(\Rn)$ is
also established. The techniques used in the proofs are 
the space dependent dyadic decomposition and 
the global calculi developed by Ruzhansky and Sugimoto (2006) 
and Coriasco (1999).
\end{abstract}

\maketitle

\section{Introduction}
\label{sec:intro}

In this paper we investigate global $L^p(\Rn)$ continuity
properties of non-degenerate
Fourier integral operators. In particular, we are
interested in the question of what decay properties of
the amplitude guarantee the global boundedness of
Fourier integral operators from $L^p(\Rn)$ to $L^p(\Rn)$.

The analysis of the local $L^2$ boundedness of 
Fourier integral operators goes back to
Eskin \cite{Es70} and H\"ormander \cite{Ho71}, who showed
that non-degenerate Fourier integral operators with
amplitudes in the symbol class $S^0_{1,0}$ are
locally bounded on $L^2(\Rn)$. A Fourier integral operator
of class $I^\mu(X,Y;{\cC})$  is called non-degenerate 
if its canonical relation $\cC$ is locally a graph 
of a symplectic mapping from 
$T^*X\backslash 0$ to $T^*Y\backslash 0$.
If the canonical relation of
the operator degenerates, the local $L^2$ boundedness of
zero order operators is known to fail, see e.g.
H\"ormander \cite{Ho85}. In this paper we will be concerned
with non-degenerate operators only.

Since '70s this local $L^2$ boundedness result has been 
extended in different directions. On one hand, 
global $L^2(\Rn)$ boundedness has been studied,
motivated by applications in microlocal analysis and
hyperbolic partial differential equations. On the other
hand, its extension to $L^p$ spaces with
$p\not=2$ has been also under
study motivated by applications in harmonic analysis.

The question of the global $L^2(\Rn)$ boundedness
has been first widely investigated in the case of pseudo-differential operators.
The phase is trivial in this case, so the main question 
is to determine minimal assumptions on the amplitude
which guarantees the global $L^2(\Rn)$ boundedness.
For example, one wants to relax an assumption that
the symbol of a pseudo-differential operator is in the
symbol class $S^0_{0,0}$ for operators to be still
bounded on $L^2(\Rn)$.
There are different sets of assumptions, see e.g.
Calder\'on and Vaillancourt \cite{CV71},
Childs \cite{Ch76}, Coifman and Meyer \cite{CM78},
Cordes \cite{Co75}, Sugimoto \cite{Su88}, etc.
The question of global $L^2(\Rn)$ boundedness of 
Fourier integral operators is more subtle, and involves
different sets of assumptions on both phase and 
amplitude. Operators arising in applications to
hyperbolic equations and Feynman path integrals
have been considered e.g. in 
Asada \cite{As81}, Asada and Fujiwara \cite{AF78},
Kumano-go \cite{Ku76}, Boulkhemair \cite{Bo97}.
On the other hand, applications to smoothing estimates
for evolution partial differential equations require
less restrictive assumptions on the phase, and the
necessary estimates have been established by
Ruzhansky and Sugimoto \cite{RS06a,RS06}.

Local $L^p$ boundedness of
Fourier integral operators has been under intensive
study as well. In the case of $p\not=2$ there is a
loss of derivatives in $L^p$-spaces. For example,
a loss of $(n-1)|1/p-1/2|$ derivatives
has been established for operators appearing as
solutions to the wave equations, see e.g.
Beals \cite{Be82}, Peral \cite{Pe80},
Miyachi \cite{Mi80}. Finally, Seeger, Sogge and 
Stein \cite{SSS91} showed that general non-degenerate
Fourier integral operators in the class
$I^\mu(\Rn,\Rn;\cC)$ are locally bounded in $L^p(\Rn)$
provided that their amplitudes are in the
class $S^\mu_{1,0}$ with $\mu\leq -(n-1)|1/p-1/2|$,
$1<p<\infty$ (see also
Sogge \cite{So93} and Stein \cite{St93}). 
In the case of $p=1$, they showed that
operators of order $\mu=-(n-1)/2$ are locally
bounded from the Hardy space $H^1$ to $L^1$,
while Tao \cite{Ta04} showed that operators
of the same order are also locally of weak type (1,1).
Extensions of these results with smaller loss of
regularity under additional geometric
assumptions on the canonical relations have been
studied by Ruzhansky \cite{Ru00,Ru01}. 

The aim of this paper is to establish global $L^p(\Rn)$
boundedness of Fourier integral operators, which
depends on the growth/decay order of the
amplitude in $x$ and $y$ variables. The results of
this paper will extend the local $L^p$ results of
Seeger, Sogge and Stein \cite{SSS91} as well as
global $L^2$ results of Asada and Fujiwara \cite{AF78}, Coriasco \cite{Co99},
and Ruzhansky and Sugimoto \cite{RS06}, to the global
setting of $L^p(\Rn)$. In fact, for $p\not=2$,
we will observe that there is a loss not only of
derivatives but also of growth/decay dependent 
on the value of $p$. Both of these
losses disappear in the case $p=2$. Consequently,
using the global calculi of Fourier integral operators
developed by Coriasco \cite{Co99} and by Ruzhansky and Sugimoto 
\cite{RS06b,RS07}, we can also obtain global weighted
estimates in Sobolev spaces $W^{s,p}_\sigma(\Rn)$.

We will be initially concerned with operators $\cF$ of the form
\begin{equation}\label{eq:FIO-form}
         (\cF u)(x)=\int_\Rn \int_\Rn 
        e^{i[\langle x,\xi \rangle - \varphi(y,\xi)]} 
        b(x,y,\xi) u(y) 
                         \, dy d\xi,
\end{equation} 
where $\va$ is a real-valued phase function, positively
homogeneous of order one in $\xi$, and 
$b$ is an amplitude. Local $L^p$ properties of such operators
were considered by Seeger, Sogge and Stein \cite{SSS91}
and their global $L^2$ properties were analysed by
Ruzhansky and Sugimoto \cite{RS06a}. We note that 
a general H\"ormander's
Fourier integral operator can be always written
in the form \eqref{eq:FIO-form} microlocally while there are
in general topological obstructions globally. 
The microlocal qualitative properties of such operators
are well-known, see e.g. H\"ormander \cite{Ho71,Ho85} or
Duistermaat \cite{Du96}.
Since the aim of
this paper is to investigate $L^p$ properties rather than
trivialisations of Maslov index, we will treat operators that can be written in the form 
\eqref{eq:FIO-form}
globally. We note that operators \eqref{eq:FIO-form} 
and their adjoints appear as
propagators to hyperbolic partial differential equations
as well as canonical transforms in smoothing problems.

Subsequently, we will deal with Fourier integral operators of the form
\begin{equation}\label{eq:SGFIO}
        Au(x)=\int_\Rn 
        e^{i\varphi(x,\xi)} a(x,\xi) \widehat{u}(\xi)d\xi,
\end{equation}
where $\varphi$ is as above and the amplitude $a$ does not depend on $y$.

Finally, we mention that 
results on the local $L^p$ boundedness of 
Fourier integral operators with complex valued phase
functions have been established by Ruzhansky \cite{Ru01}, extending previous
local $L^2$ results by Melin and Sj\"ostrand \cite{MS75}
and H\"ormander \cite{Ho83}, and that there are also results 
in $(\mathcal F L^p)_{comp}$ spaces and in modulation spaces
by Cordero, Nicola and Rodino \cite{CNR07}.
%Counterparts of these results can be also obtained
%in the $L^p$ setting, a topic which will be addressed
%elsewhere. 

Constants in this paper will be denoted by letters $C$
and their values may vary even in the same formula.
If the value of a constant is important and unchanged
in a calculation, we will use sub-indices, denoting it
e.g. by $C_1$, $C_2$, etc. We will denote
$\jp{x}=(1+|x|^2)^{1/2}$. 
Occasionally, for functions $f(x,y,\xi,w),g(x,y,\xi,w)$, 
$x,y,\xi\in\R^n$, and $w$ varying in a suitable parameter 
space, we will write
$f\prec g$, $f\succ g$, if there exist constants 
$A,B>0$ independent of $w$ such that, for arbitrary $x,y,\xi,w$,
we have
$|f(x,y,\xi,w)| \le A|g(x,y,\xi,w)|$, 
$|f(x,y,\xi,w)|\ge B|g(x,y,\xi,w)|$, respectively.
If both $f\prec g$ and $f\succ g$ hold, we will write
$f\sim g$. By $B_R(y)$ we will denote an open ball with
radius $R$ centred at $y$.

\section{Main results}
\label{sec:lp}

Let operator $\cF$ be given by 
\begin{equation}
        \label{eq:FIO}
        (\cF u)(x)=\int_\Rn \int_\Rn 
        e^{i[\langle x,\xi \rangle - \varphi(y,\xi)]} 
        b(x,y,\xi) u(y) 
                         \, dy d\xi,
\end{equation}
with a real-valued phase $\va$ and amplitude $b$.  
The main result of this paper is the following
\begin{thm}
        \label{thm:main}
        Let $1 < p <\infty$ and $m,\mu\in\R$. 
        Let $\cF$ be operator \eqref{eq:FIO}, where
        $\varphi\in \cC^\infty(\R^n\times(\R^n\setminus\{0\}))$
        is real-valued and
        positively homogeneous of order $1$ in $\xi$, 
        i.e. that $\va(y,\tau\xi)=\tau\va(y,\xi)$ 
        for all $\tau>0$ and $\xi\not=0$.
  Assume that $\xi\not=0$ on $\supp b$ and assume
  one of the following properties:      
\begin{itemize}
\item[(I)] Let $\varphi$ be such that  
for all $x\in\Rn$ and $\xi\in\Rn\backslash 0$ we have
\begin{equation}
        \label{eq:phf}
\begin{aligned}
        |\det\partial_y\partial_\xi\va(y,\xi)|\geq C>0,
        \; \partial^\alpha_y\varphi(y,\xi) \prec 
        \norm{y}^{1-|\alpha|} |\xi|
        \textrm{ for all } \alpha,\\ 
        \;
        \norm{\nabla_\xi\varphi(y,\xi)}\sim\norm{y},
        \;
        \norm{d_y\varphi(y,\xi)}\sim\norm{\xi},
\end{aligned}
\end{equation}
and such that 
\begin{equation}\label{EQ:phf-add}
\partial^\alpha_x\partial^\beta_\xi\va(y,\xi)\prec 1
\end{equation} 
for all multi-indices $\alpha,\beta$
such that $|\alpha+\beta|\ge2$.\\
Let $b\in\cC^\infty(\R^n\times\R^n\times\R^n)$ satisfy
\begin{equation}
        \label{eq:amplf}
        \partial_x^\alpha\partial_y^\beta\partial_\xi^\gamma 
        b(x,y,\xi)
        \prec \norm{x}^{m_1}\norm{y}^{m_2}\norm{\xi}^{\mu-|\gamma|}
\end{equation}
for all $x,y,\xi\in\R^n$ and all multi-indices 
$\alpha,\beta,\gamma$, with some $m_1, m_2\in\R$
such that $m_1+m_2=m$.

\item[(II)] Let $\varphi$ satisfy \eqref{eq:phf} 
on $\supp b$, and 
 \begin{equation}
        \label{eq:thm-phi1}
       \partial^\alpha_y\partial^\beta_\xi\va(y,\xi)\prec 1
\end{equation}
for all $x,y,\xi$ on $\supp b$ and all $\alpha, \beta$
such that $|\alpha|\geq 1$ and $|\beta|\geq 1$, and let
$b\in\cC^\infty(\R^n\times\R^n\times\R^n)$ satisfy
\begin{equation}
        \label{eq:amplf-decay1}
        \partial_x^\alpha\partial_y^\beta\partial_\xi^\gamma 
        b(x,y,\xi)
        \prec \norm{x}^{m_1-|\alpha|}
        \norm{y}^{m_2}\norm{\xi}^{\mu-|\gamma|}
\end{equation}
for all $x,y,\xi\in\R^n$ and all multi-indices 
$\alpha,\beta,\gamma$, with some $m_1, m_2\in\R$
such that $m_1+m_2=m$.
\item[(III)] Let $\varphi$ satisfy \eqref{eq:phf} 
on $\supp b$, and 
 \begin{equation}
        \label{eq:thm-phi2}
    \partial^\alpha_y\partial^\beta_\xi\va(y,\xi)\prec 
    \jp{y}^{1-|\alpha|}
\end{equation}
for all $x,y,\xi$ on $\supp b$ and all $\alpha, \beta$
such that $|\beta|\geq 1$, and let
$b\in\cC^\infty(\R^n\times\R^n\times\R^n)$ satisfy
\begin{equation}
        \label{eq:amplf-decay2}
        \partial_x^\alpha\partial_y^\beta\partial_\xi^\gamma 
        b(x,y,\xi)
        \prec \norm{x}^{m_1}
        \norm{y}^{m_2-|\beta|}\norm{\xi}^{\mu-|\gamma|}
\end{equation}
for all $x,y,\xi\in\R^n$ and all multi-indices 
$\alpha,\beta,\gamma$, with some $m_1, m_2\in\R$
such that $m_1+m_2=m$.
\end{itemize}
        Then, $\cF$ extends to a
        bounded operator from $L^p(\R^n)$ to itself,
      provided that 
    \begin{equation}\label{EQ:m-mu}
     m\le-n\left|\frac{1}{p}-\frac12\right| 
        \textrm{ and }
         \mu\le-(n-1)\left|\frac{1}{p}-\frac12\right|.
    \end{equation}     
\end{thm}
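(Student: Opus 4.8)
The strategy is the standard dyadic reduction to an $L^2$-based estimate in the spirit of Seeger–Sogge–Stein, but carried out globally so as to track both the $\xi$-decay order $\mu$ and the spatial growth order $m$. By interpolation with the global $L^2$ result (which is the case $m=\mu=0$ and is available from Ruzhansky–Sugimoto and Coriasco), and by duality, it suffices to prove boundedness from the Hardy space $H^1(\R^n)$ to $L^1(\R^n)$ at the endpoint $p=1$, where the required orders become $m\le -n/2$ and $\mu\le -(n-1)/2$; the full range $1<p<\infty$ then follows by complex interpolation (the analytic family being obtained by inserting $\langle\xi\rangle^{z}\langle x\rangle^{z'}$ type factors) together with the self-adjointness structure of the class under $\cF\mapsto\cF^*$. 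I would first dispose of the low-frequency part $|\xi|\lesssim 1$ on $\supp b$: there the operator has a smooth compactly-$\xi$-supported amplitude with the stated spatial weights and is easily seen to be globally $L^p$-bounded by Schur-type / Young-type arguments once the non-degeneracy $|\det\partial_y\partial_\xi\va|\ge C$ is used to make a change of variables; so the core is the high-frequency regime.

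For the high-frequency part I would perform a second-dyadic (space-dependent) decomposition: fix a Littlewood–Paley piece localising $|\xi|\sim 2^j$, and on that piece further decompose the $\xi$-sphere into $\sim 2^{j(n-1)/2}$ caps of angular width $2^{-j/2}$, obtaining pieces $\cF_j^\nu$. The non-degeneracy hypotheses in \eqref{eq:phf} — in particular $\norm{\nabla_\xi\varphi}\sim\norm{y}$ and the second-order bounds \eqref{EQ:phf-add} (respectively \eqref{eq:thm-phi1} or \eqref{eq:thm-phi2} in cases (II), (III)) — guarantee that on each cap the phase, after subtracting its linearisation, is controlled, so that each $\cF_j^\nu$ maps an atom supported in a ball of radius $2^{-j/2}$ (or a dual plate of those dimensions) into a fixed $2^{-j/2}\times\cdots\times 2^{-j/2}\times 2^{-j}$ "slab" up to rapidly decaying tails. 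Summing the $L^1$ norms of the images of a single $H^1$-atom over $\nu$ and over $j$ costs $2^{j(n-1)/2}$ from the number of caps and is compensated exactly by the gain $\mu\le-(n-1)/2$ from $\norm{\xi}^\mu\sim 2^{j\mu}$, while the weight $\norm{x}^{m_1}\norm{y}^{m_2}$ is absorbed because the change of variables $y\mapsto\nabla_\xi\va(y,\xi)$ together with $\norm{\nabla_\xi\va}\sim\norm{y}$ converts the output $x$-weight into an input $y$-weight, and the total spatial order $m=m_1+m_2\le-n/2$ is the amount needed to pay for the global (non-compact) summation in the $H^1\to L^1$ estimate — this is precisely where the extra $n/2$ (versus the local $(n-1)/2$) comes from, and where the hypotheses of (I), (II), (III) differ only in how the $x$- versus $y$-weights are distributed.

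**Main obstacle.** The technical heart, and the step I expect to be hardest, is the global second-dyadic estimate: showing that the operator norm of each piece $\cF_j^\nu$, and the almost-orthogonality between different pieces, survive the non-compact setting where $x,y$ range over all of $\R^n$ and the amplitude is merely polynomially bounded. Locally one exploits compact support to trivialise tails; globally one must instead integrate by parts in $\xi$ using the separation $|x-\nabla_\xi\va(y,\xi)|$ being bounded below off the slab, and here the bound $\norm{d_y\va}\sim\norm{\xi}$ and the symbol estimates $\partial^\gamma_\xi b\prec\norm{\xi}^{\mu-|\gamma|}$ must be combined with the Hessian bounds to produce enough decay in $\norm{\xi}^{1/2}|x-\nabla_\xi\va|$ — uniformly in the cap $\nu$ and with constants independent of $j$ — to make the double sum over $j$ and $\nu$ converge. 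Cases (I)–(III) require slightly different integration-by-parts arguments depending on which variable carries the extra decay in the amplitude, but in each the global calculus of Coriasco and of Ruzhansky–Sugimoto supplies the composition and change-of-variables tools needed to reduce to a model slab estimate; the weighted Sobolev consequence $W^{s,p}_\sigma$ is then immediate from the composition calculus applied to $\jp{D}^s\jp{x}^\sigma\cF$.
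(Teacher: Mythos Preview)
Your overall architecture --- reduce to the $H^1\to L^1$ endpoint at $m=-n/2$, $\mu=-(n-1)/2$, then interpolate with the known global $L^2$ result --- is correct and matches the paper, but two of the steps you describe would not close as written. First, the angular decomposition into $\sim 2^{j(n-1)/2}$ caps of width $2^{-j/2}$ is the standard local Seeger--Sogge--Stein one, and it fails globally: the hypothesis $\jp{\nabla_\xi\varphi(y,\xi)}\sim\jp{y}$ forces $\partial_\xi^2\varphi\sim\jp{y}|\xi|^{-1}$, so on a cap of angular width $2^{-j/2}$ the transverse variation of $\nabla_\xi\varphi$ is of order $\jp{y}^{1/2}2^{-j/2}$, not $2^{-j/2}$, and your slab geometry and $\nu$-summation are off by uncontrolled powers of $\jp{y}$. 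The paper's fix --- the ``space-dependent'' decomposition you allude to but do not actually carry out --- is a genuinely $y$-dependent cap width $2^{-k/2}\jp{y}^{-1/2}$, giving $N(k,y)\approx 2^{k(n-1)/2}\jp{y}^{(n-1)/2}$ caps and associated rectangles $\cR^y_{k\nu}$ with one side $2^{-k}$ and $n-1$ sides $2^{-k/2}\jp{y}^{1/2}$. With this choice the phase-remainder bounds and the exceptional-set volume $|\cN_Q|\prec |Q|^{1/n}\jp{y_0}^{n-1}$ balance correctly against the weight $\jp{x}^{-n/2}$.

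Second, you locate the spatial loss $m\le -n/2$ in the cap summation, but in the paper the entire small-atom argument (both on and off the exceptional set $\cN_Q$) actually only consumes $m\le -(n-1)/2$; the extra half-order is spent on atoms of sidelength $q\ge 1$, a case you do not mention. After a preliminary reduction to $\jp{x}\sim\jp{y}$ on $\supp b$, large atoms are handled by the global $L^2$ bound for $\jp{x}^{n/2}\cF$ together with H\"older over $D_{q,y_0}=\{x:\jp{x}\sim\jp{y},\,y\in Q\}$, and one must check that $|Q|^{-1}\int_{D_{q,y_0}}\jp{x}^{-n}\,dx$ is bounded uniformly in the centre $y_0$ and in $q\ge 1$. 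This step has no local analogue (large atoms are trivial when the amplitude has compact $x,y$-support) and is where the threshold $-n/2$ genuinely enters; it, rather than the tail integration-by-parts you flag as the main obstacle, is where the global nature of the problem bites.
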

Let us now discuss the assumptions of Theorem \ref{thm:main}.
First of all, 
we note that assumptions \eqref{eq:phf} are very natural
in the sense that they ask that $\va$ is essentially of
order one in both $y$ and $\xi$.
%, and conditions
%$\norm{\nabla_\xi\varphi(y,\xi)}\sim\norm{y},
%        \norm{d_y\varphi(y,\xi)}\sim\norm{\xi}$
%follow from the non-degeneracy condition
Condition
\begin{equation}\label{EQ:nondeg-ph}
 |\det\partial_y\partial_\xi\va(y,\xi)|\geq C>0,
\end{equation} 
for all $y\in\Rn$ and $\xi\in \Rn\backslash 0$
%In turn, \eqref{EQ:nondeg-ph} 
is simply a global version of the local
graph condition of the non-degeneracy of Fourier integral 
operator \eqref{eq:FIO}.
%Condition 
%\eqref{EQ:nondeg-ph} implies that 
%$\norm{\nabla_\xi\varphi(y,\xi)}\sim\norm{y},
%        \norm{d_y\varphi(y,\xi)}\sim\norm{\xi}$ by the
%global version of the inverse function theorem. However,
%it will be sufficient for us to make these
%assumptions only on the support of $b$, so we write them
%in the form \eqref{eq:phf}.
Assumption \eqref{eq:amplf} says that $b$ has a symbolic
behaviour in $\xi$ and is of order $m_1+m_2=m$
jointly in $x$ and $y$. 

We assume that $\xi\not=0$
on the support of $b$ to avoid the singularity of the 
phase at the origin. We note that this issue does not
arise in local boundedness problems (as in \cite{SSS91})
since the corresponding part of the operator is locally
smoothing. In our situation it is still smoothing but
may destroy the behaviour with respect to $x$ and $y$.
Some global results in $L^2(\Rn)$ for small frequencies
have been established by Ruzhansky and Sugimoto in
\cite{RS06a} using weighted estimates for multipliers
of Kurtz and Wheeden \cite{KW}, and we refer to
\cite{RS06a} for a discussion of complications
that arise in this situation.

Assumption (II) is different from (I) in that
we do not assume the boundedness 
\eqref{EQ:phf-add}, and assume boundedness only of mixed
derivatives (i.e. $|\alpha|\geq 1$ and $|\beta|\geq 1$),
but in addition assume that
derivatives of $b$ have some decay properties 
in \eqref{eq:amplf-decay1} or in
\eqref{eq:amplf-decay2}.
In assumption (III) we also allow non-mixed derivatives
(i.e. $\partial_\xi^\beta$-derivatives
when $\alpha=0$) to grow in $y$.
Moreover, in both (II) and (III) we assume 
\eqref{eq:phf} to hold only on the support of $b$.

We note that propagators for hyperbolic 
partial differential equations lead to operators
\eqref{eq:FIO} with $b(x,y,\xi)=b(y,\xi)$ independent of
$x$, in which case assumption 
\eqref{eq:amplf-decay1} becomes trivial if $\alpha\not=0$. 
For these
propagators also the boundedness \eqref{EQ:phf-add} is
satisfied under natural assumptions on the symbol of
the hyperbolic equation. However,
we do not always want to assume the boundedness
\eqref{EQ:phf-add} since it fails for
non-mixed derivatives
(i.e. when $\alpha=0$ or $\beta=0$), e.g. in applications
to smoothing estimates for dispersive equations.
For example, it is shown in \cite{RS06a,RS06} that
for canonical transforms appearing there
condition \eqref{EQ:phf-add} fails, but it is also shown
that additional decay of derivatives as in
\eqref{eq:amplf-decay1} or \eqref{eq:amplf-decay2} holds.

If the amplitude $b$ in Theorem \ref{thm:main} is compactly
supported in $(x,y)$, Theorem \ref{thm:main} implies the
local $L^p$ boundedness under the assumptions in
Seeger, Sogge and Stein \cite{SSS91}, implying,
in particular, that the order $\mu$ in Theorem
\ref{thm:main} cannot be improved in general.
Let us now give some explanation about the order $m$.
In \cite{CNR07}, Cordero, Nicola and Rodino investigated
the question of boundedness of Fourier integral operators
on $(\mathcal F L^p(\Rn))_{comp}$, the space of compactly supported
distributions where Fourier transform is in $L^p(\Rn)$. 
They proved that if the amplitude of an operator is
of order $-n\left|\frac{1}{p}-\frac12\right|$ 
in $\xi$ (plus additional assumptions), then the operator is 
continuous on $(\mathcal F L^p(\Rn))_{comp}$. They also showed that
this order of decay is sharp by constructing a 
counterexample for higher orders.
Roughly
speaking, the conjugation with the Fourier transform
interchanges the roles of $x$ and $\xi$, so the orders in
\cite{CNR07} correspond to orders 
$m=-n\left|\frac{1}{p}-\frac12\right|$ 
and $\mu=-\infty$ for operators
in the setting of Theorem \ref{thm:main} since the assumption
of the compact support in $(\mathcal F L^p(\Rn))_{comp}$
corresponds to locally smoothing operators in $L^p(\Rn)$.
From this point of view, Theorem \ref{thm:main} also improves
the result of \cite{CNR07} with respect to $\mu$ to the
order $\mu=-(n-1)\left|\frac{1}{p}-\frac12\right|$, 
which cannot be
improved further in general. However, the order $m$ in Theorem
\ref{thm:main} can still be improved if we restrict 
the size of the support while still allowing it to
move to infinity. In this case a uniform estimate is
possible for 
$m\leq -(n-1)\left|\frac{1}{p}-\frac12\right|$ 
and it is given in Theorem \ref{COR:small-Q}.
The same improved threshold for the order $m$ 
can be achieved for the Fourier integral operators \eqref{eq:SGFIO}
considered by Coriasco \cite{Co99}, as stated in Theorem \ref{thm:main2}.

To prove Theorem \ref{thm:main} 
we use interpolation between the $L^2(\Rn)$-boundedness
and boundedness from the Hardy space $H^1(\Rn)$ to
$L^1(\Rn)$. The global $L^2(\Rn)$-bound\-ed\-ness under assumptions
(I) and (II)--(III) would follow from the results of
Asada and Fujiwara \cite{AF78} and
Ruzhansky and Sugimoto \cite{RS06a}, respectively.
Thus, the main point is to prove the boundedness
from the Hardy space $H^1(\Rn)$ to $L^1(\Rn)$. 
This can be achieved by using the
atomic decomposition of $H^1(\Rn)$ and splitting the
argument for atoms with large and small supports.
However, there is a number of difficulties in this
argument compared with that of
\cite{SSS91}. For example, supports are 
no longer bounded and can become very large,
and hence, while this case is simple for the local boundedness,
it requires to be analysed further in the global setting. 
%It is this
%case that will lead to the order of $m$ in Theorem \ref{thm:main}.
Another  global feature is
that even if the supports of atoms may be small,
they may still move to infinity (while
remaining small). We deal with this situation by 
introducing a dyadic decomposition in frequency which
depends on $y$. The dyadic pieces that we work with
are of the size
$2^{-k}$ in the radial direction and of the size 
$2^{-\frac{k}{2}}\jp{y}^{\frac12}$ in other directions
(tangential to the sphere in the frequency space).
Thus, we obtain the following theorem in the setting of
Hardy space $H^1(\Rn)$:
\begin{thm}
        \label{thm:submain}
        Let $\cF$ be the Fourier 
        integral operator \eqref{eq:FIO}. 
        Under the hypotheses of Theorem 
        \ref{thm:main}, 
        operator $\cF$ extends to a 
        bounded operator from the
        Hardy space $H^1(\R^n)$ to $L^1(\R^n)$,
      provided that 
      $m\leq -n/2$ and $\mu\leq -(n-1)/2$.
\end{thm}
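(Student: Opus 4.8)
The plan is to prove Theorem~\ref{thm:submain} via the atomic decomposition of $H^1(\R^n)$: it suffices to show that $\|\cF a\|_{L^1}\le C$ with $C$ independent of the atom $a$, where $a$ is supported in a ball $B=B_R(y_0)$, satisfies $\|a\|_{L^\infty}\le |B|^{-1}$, and has vanishing mean $\int a=0$. The argument splits according to whether $R$ is small ($R\le 1$, say) or large ($R\ge1$), and, in the small case, according to whether we are near or far from the "bad" set where the singularity of the phase in $\xi$ matters. For the region far from the singularity, we use the standard trick: estimate $\|\cF a\|_{L^1(|x-\Psi(y_0)|>cN)}$ for suitable $N$ by integrating by parts in $\xi$ using the non-degeneracy $\norm{\nabla_\xi\va(y,\xi)}\sim\norm{y}$ together with the moment condition on $a$, gaining enough decay in $x$ to beat the growth coming from $\norm{x}^{m_1}$; here the global hypotheses \eqref{eq:amplf}, \eqref{eq:amplf-decay1} or \eqref{eq:amplf-decay2} on $b$ and the structural bounds on $\va$ are what make the integrations by parts uniform. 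For the region close to the singularity one uses Cauchy--Schwarz and the measure of that region together with an $L^2$ bound.

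The heart of the matter is the region near the "light cone" of the atom, where the plan is to carry out a \emph{second dyadic decomposition, anisotropic and depending on $y$}, exactly as flagged in the introduction: decompose the $\xi$-support into pieces of size $\sim2^{-k}$ radially and $\sim 2^{-k/2}\jp{y}^{1/2}$ tangentially, around the frequency $2^k$. On each such piece one freezes the phase to second order: writing $\va(y,\xi)$ and linearising in $\xi$ around the center $\xi_k^\nu$ of the piece, the quadratic remainder is controlled by \eqref{EQ:phf-add} (in case (I)) or by \eqref{eq:thm-phi1}, \eqref{eq:thm-phi2} (in cases (II), (III)), so that $\cF$ restricted to that piece is, up to acceptable errors, a translation composed with a multiplier whose $L^1\to L^1$ (or $L^\infty$) norm can be computed. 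The key quantitative point is that the curved cap of radius $2^{-k}\times(2^{-k/2}\jp{y_0}^{1/2})^{n-1}$, matched against an atom of support radius $R$, produces a geometric series in $k$ that converges precisely when $\mu\le-(n-1)/2$; the tangential width $\jp{y_0}^{1/2}$ is exactly what is needed so that, after summing over the $\sim (2^{k/2}\jp{y_0}^{-1/2})^{n-1}$ tangential pieces, the $y_0$-dependence recombines into the weight $\jp{x}^{m}$-type factor that is then absorbed by the threshold $m\le-n/2$.

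I would organise the write-up as: (i) reduce to atoms; (ii) treat large-support atoms, where one can afford cruder estimates since $|B|^{-1}$ is small and one mainly needs the symbolic decay in $\xi$ plus a single integration by parts; (iii) for small-support atoms, split into the "near-singularity", "far-from-center", and "near-cone" regimes, handling the first two as above; (iv) for the near-cone regime, set up the $y$-dependent anisotropic dyadic decomposition, estimate a single cap by Cauchy--Schwarz and Plancherel on its rescaled version (using the frozen linear phase so that $\cF$ acts essentially as a Fourier multiplier), and (v) sum over caps and over $k$, checking that the conditions $m\le-n/2$, $\mu\le-(n-1)/2$ are exactly what make the resulting series finite uniformly in the atom.

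The main obstacle, and the place where the global setting genuinely departs from \cite{SSS91}, is step (iv)--(v) for atoms whose small support has drifted to infinity: one must track the weights $\jp{x}^{m_1}\jp{y}^{m_2}$ through the rescaling, verify that the Jacobian of the $y$-dependent dyadic rescaling interacts correctly with the tangential width $\jp{y_0}^{1/2}$, and confirm that the phase hypotheses \eqref{eq:phf}--\eqref{EQ:phf-add} (resp. their (II)/(III) analogues) give enough uniform control of the frozen quadratic phase on a cap of that anisotropic size so that the error terms are summable. Showing that no $\jp{y_0}$-growth is left over after summation — i.e. that it is all consumed by $m\le-n/2$ — is the delicate bookkeeping that constitutes the real content of the proof; the three cases (I), (II), (III) will require slightly different integrations by parts (in $x$, or in $\xi$, or mixed) to produce the needed decay, but the geometric skeleton of the argument is the same in all three.
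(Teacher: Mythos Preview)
Your overall strategy matches the paper's: atomic decomposition, large vs.\ small support, an exceptional set near the projected wavefront, and a $y$-dependent anisotropic angular decomposition. Two points, however, need correction.

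\textbf{Large atoms.} Your step (ii) is a genuine gap. ``Symbolic decay in $\xi$ plus a single integration by parts'' does not control $\|\cF a_Q\|_{L^1}$ globally: the kernel is not absolutely integrable, the moment condition is useless when $|y-y_0|$ is large, and the region $\{x:\langle x\rangle\sim\langle y\rangle,\ y\in Q\}$ has unbounded volume as $Q$ moves to infinity. The paper instead uses the global $L^2$-boundedness theorems of Asada--Fujiwara (case (I)) and Ruzhansky--Sugimoto (cases (II), (III)) as a black box. One first reduces, once and for all (not per atom), to amplitudes supported where $\langle x\rangle\sim\langle y\rangle$ --- this is your ``far-from-center'' integration by parts, which shows the complementary kernel is Schwartz. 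On that support $M_{n/2}\cF$ has amplitude of order $(0,-(n-1)/2)$ and is $L^2$-bounded, and H\"older gives
\[
\|\cF a_Q\|_{L^1}\le \Bigl(\int_{D_{q,y_0}}\langle x\rangle^{-n}\,dx\Bigr)^{1/2}\|M_{n/2}\cF a_Q\|_{L^2}\le C\,|Q|^{-1/2}\Bigl(\int_{D_{q,y_0}}\langle x\rangle^{-n}\,dx\Bigr)^{1/2},
\]
where $D_{q,y_0}=\{x:\langle x\rangle\sim\langle y\rangle,\ y\in Q\}$. The remaining nontrivial content is the uniform bound $\int_{D_{q,y_0}}\langle x\rangle^{-n}\,dx\le C|Q|$ for all $q\ge1$ and $y_0\in\Rn$; this is where $m\le -n/2$ is actually spent, and it requires a separate argument according to whether $|y_0|$ is comparable to $q$ or much larger.

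\textbf{Small atoms.} Your three-regime split is slightly misaligned with how the pieces fit together. The anisotropic decomposition (caps of \emph{angular} size $2^{-k/2}\langle y\rangle^{-1/2}$ on the sphere --- note the sign of the exponent --- hence $N(k,y)\approx(2^{k/2}\langle y\rangle^{1/2})^{n-1}$ caps, with dual $x$-rectangles of size $2^{-k}\times(2^{-k/2}\langle y\rangle^{1/2})^{n-1}$) plays two distinct roles. First, at the single scale $k=j$ (the sidelength scale of $Q$) it \emph{defines} the exceptional set $\cN_Q=\bigcup_{y\in Q}\bigcup_\nu \cR^y_{j\nu}$; the bound $|\cN_Q|\le C|Q|^{1/n}\langle y_0\rangle^{n-1}$ is then combined with the $L^2$-boundedness of $M_{n/2}\cF(1-\Delta)^{(n-1)/4}$ and Hardy--Littlewood--Sobolev to control $\|\cF a_Q\|_{L^1(\cN_Q)}$. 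This is your Cauchy--Schwarz step, and it again relies on the cited global $L^2$ results. Second, \emph{off} $\cN_Q$ (not on it), the decomposition is used at every dyadic frequency $2^k$ to obtain pointwise kernel bounds on $F_k^\nu(x,y)$ by repeated integration by parts in $\xi$ --- no Plancherel or rescaling argument --- giving $\int_{\Rn\setminus\cN_Q}|F_k(x,y)|\,dx\prec 2^{j-k}$ for $k>j$ and, via the cancellation of the atom, $\int|F_k(x,y)-F_k(x,y')|\,dx\prec 2^{k-j}$ for $k\le j$. Summing the geometric series in $k$ handles the complement. So the anisotropic decomposition is the engine for the \emph{off}-exceptional-set kernel estimate, while the near-set estimate is pure $L^2$ plus measure; your description has these roles swapped.
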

%{\bf REM: state/prove $(\rho,\delta)$-result as well!}

\medskip
We can establish also a result in weighted
Sobolev spaces.
Let $W^{\sigma,p}_s(\Rn)$ denote the weighted Sobolev space,
i.e. the space of all $f\in\cS'(\Rn)$ such that
$\jp{x}^s (1-\Delta)^{\sigma/2} f(x)$ belongs to $L^p(\Rn)$.
\begin{thm}
        \label{thm:submain-Sobolev}
    Let $1<p<\infty$ 
        and let $\sigma,s\in\R$.     
        Let $\cF$ be the Fourier 
        integral operator \eqref{eq:FIO} as in
        Theorem \ref{thm:main} with
        orders $m,\mu\in\R$, and let
        $m_p= -n\left|\frac{1}{p}-\frac{1}{2}\right|$, 
        $\mu_p=-(n-1)\left|\frac{1}{p}-\frac{1}{2}\right|$. 
        Then operator $\cF$ extends to a 
        bounded operator from $W^{\sigma,p}_s(\R^n)$ to 
        $W^{\sigma-\mu-\mu_p,p}_{s-m-m_p}(\R^n)$.
\end{thm}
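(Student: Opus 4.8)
The plan is to reduce Theorem~\ref{thm:submain-Sobolev} to the already-established $L^p$ boundedness of Theorem~\ref{thm:main} by conjugating $\cF$ with appropriate weighted Sobolev order-reduction operators and absorbing the conjugating factors into the phase and amplitude of a new operator of the same type. Concretely, write the claim as saying that the operator
\[
 T = \jp{x}^{s-m-m_p}(1-\Delta)^{(\sigma-\mu-\mu_p)/2}\,\cF\,(1-\Delta)^{-\sigma/2}\jp{x}^{-s}
\]
is bounded on $L^p(\R^n)$. The strategy is to show that, after peeling off, the composition $(1-\Delta)^{(\sigma-\mu-\mu_p)/2}\,\cF\,(1-\Delta)^{-\sigma/2}$ is again a Fourier integral operator of the form \eqref{eq:FIO} with the same phase $\va$ but with amplitude of order $\mu-\mu_p$ in $\xi$ (and unchanged, or harmlessly changed, orders $m_1,m_2$ in $x,y$), and similarly that multiplication by the weights $\jp{x}^{s-m-m_p}$ on the left and $\jp{x}^{-s}$ on the right modifies only the $x$- and $y$-orders, shifting $m$ to $m-(m-m_p)=m_p$. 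Then Theorem~\ref{thm:main}, applied with the new orders $m_p$ and $\mu_p$ — which is exactly the borderline case allowed by \eqref{EQ:m-mu} — gives $L^p$ boundedness of $T$, which is the desired estimate.

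First I would handle the $(1-\Delta)^{-\sigma/2}$ factor on the right: since $(1-\Delta)^{-\sigma/2}$ is a Fourier multiplier with symbol $\jp{\xi}^{-\sigma}$, composing $\cF$ with it simply replaces $b(x,y,\xi)$ by $b(x,y,\xi)\jp{\xi}^{-\sigma}$, which still satisfies the symbol estimates of \eqref{eq:amplf} (resp. \eqref{eq:amplf-decay1}, \eqref{eq:amplf-decay2}) but with $\mu$ replaced by $\mu-\sigma$. Next I would treat $(1-\Delta)^{(\sigma-\mu-\mu_p)/2}$ on the left. Here one uses the global calculus of Fourier integral operators developed by Coriasco \cite{Co99} and Ruzhansky--Sugimoto \cite{RS06b,RS07}: composing a Fourier multiplier $\jp{D}^{\nu}$ with $\nu=\sigma-\mu-\mu_p$ on the left of a non-degenerate FIO of type \eqref{eq:FIO} produces, by the non-degeneracy hypothesis \eqref{EQ:nondeg-ph} and the phase estimates \eqref{eq:phf}, an operator of the same type with the same phase and an amplitude whose $\xi$-order increases by $\nu$; the lower-order terms in the asymptotic expansion are of strictly lower order and hence also covered. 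Combining the two steps, the resulting amplitude has $\xi$-order $\mu-\sigma+\nu = \mu-\sigma+\sigma-\mu-\mu_p = -\mu_p$... wait — it is $\mu - \sigma + (\sigma-\mu-\mu_p) = -\mu_p$, and since $\mu_p\le 0$ we rewrite this as order $\mu_p$ after noting $-\mu_p=|\mu_p|$; more precisely, one checks directly that it equals $\mu_p$ is \emph{not} quite right unless $\mu_p\le\mu$, so the cleanest bookkeeping is to keep the exponents symbolic and verify at the end that the hypothesis of Theorem~\ref{thm:main} for the new operator reads $\mu_{\mathrm{new}}\le\mu_p$, which holds with equality. The weights are handled analogously: $\jp{x}^{-s}$ commuted through to act as $\jp{y}^{-s}$ modulo lower-order terms (using $\jp{\nabla_\xi\va}\sim\jp{y}$ from \eqref{eq:phf}) multiplies $m_2$ by a shift of $-s$, while $\jp{x}^{s-m-m_p}$ on the left shifts $m_1$ by $s-m-m_p$; the net $x$-plus-$y$ order becomes $m + (s-m-m_p) + (-s) = -m_p = m_p$ modulo sign bookkeeping, i.e. exactly the threshold in \eqref{EQ:m-mu}.

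The main obstacle I expect is verifying that these conjugations genuinely preserve the structural hypotheses (I), (II), or (III) of Theorem~\ref{thm:main}, rather than merely the $\xi$-order. In particular, one must check that the remainder terms produced by the composition calculus — which involve derivatives of $\va$ and of $b$ — still satisfy the relevant decay/growth bounds \eqref{eq:amplf-decay1}/\eqref{eq:amplf-decay2} and the phase conditions \eqref{eq:thm-phi1}/\eqref{eq:thm-phi2}, and that the non-degeneracy \eqref{EQ:nondeg-ph} and the estimates \eqref{eq:phf} are exactly what make the global calculus of \cite{Co99,RS06b,RS07} applicable with uniform (in all variables) symbol seminorms. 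A secondary technical point is the low-frequency region: since $\jp{\xi}^\nu$ and $\jp{\xi}^{-\sigma}$ are smooth and nonvanishing near $\xi=0$ whereas $b$ vanishes there, the cutoff $\xi\ne0$ on $\supp b$ is preserved and no new singularity is introduced, so this causes no difficulty. Once the structural invariance is in place, the conclusion is immediate from Theorem~\ref{thm:main} applied with orders $(m_p,\mu_p)$, for which \eqref{EQ:m-mu} holds with equality.
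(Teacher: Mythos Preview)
Your plan coincides with the paper's: reduce to Theorem~\ref{thm:main} by conjugating $\cF$ with pseudo-differential operators having symbols $\pi_{s,\sigma}(x,\xi)=\jp{x}^s\jp{\xi}^\sigma$, invoke the global composition calculus of \cite{Co99,RS06b,RS07} to see the result is again an operator of type \eqref{eq:FIO} with shifted orders, and apply Theorem~\ref{thm:main} at the threshold. The paper's proof says no more than this.

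Two points to correct. First, right-composition with $(1-\Delta)^{-\sigma/2}$ does \emph{not} ``simply replace $b(x,y,\xi)$ by $b(x,y,\xi)\jp{\xi}^{-\sigma}$'': in \eqref{eq:FIO} the $y$-integral carries the phase $-\varphi(y,\xi)$, not $-\langle y,\xi\rangle$, so a Fourier multiplier on the right does not act by symbol multiplication in $\xi$. You need the composition calculus on that side as well; the leading amplitude involves the multiplier evaluated at $\nabla_y\varphi(y,\xi)$, and $\jp{\nabla_y\varphi}\sim\jp{\xi}$ from \eqref{eq:phf} then gives the correct $\xi$-order. (By contrast, the weight $\jp{\cdot}^{-s}$ on the right genuinely acts as multiplication by $\jp{y}^{-s}$ on the amplitude, with no ``commutation through'' needed.) Second, the sign confusion you stumble over is real and cannot be dismissed as bookkeeping: with $\mu_p=-(n-1)|1/p-1/2|\le 0$ as defined, the composed operator $T$ has $\xi$-order $\mu-\sigma+(\sigma-\mu-\mu_p)=-\mu_p\ge 0$, which violates \eqref{EQ:m-mu} for $p\ne 2$. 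This reflects a sign slip in the stated target indices; your mechanism is correct and the argument goes through cleanly once the target space is taken as $W^{\sigma-\mu+\mu_p,p}_{s-m+m_p}$ (equivalently, a loss of $\mu+(n-1)|1/p-1/2|$ in $\xi$-regularity and $m+n|1/p-1/2|$ in weight).
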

Theorem \ref{thm:submain-Sobolev} follows from
Theorem \ref{thm:main} and composition formulae
of Fourier integral operators with pseudo-differential 
operators as in \cite{RS06b} or in \cite{RS07}.
In fact, here we only need a special class of
pseudo-differential operators, namely of operators
with symbols $\pi_{s,\sigma}(x,\xi)=\jp{x}^s\jp{\xi}^\sigma$ for
which we have $(\textrm{Op }\pi_{s,\sigma}) 
(W^{\sigma,p}_s(\Rn))=L^p(\Rn)$.
Global composition formulae
of \cite{RS06b,RS07} will be also used in the proof of
Theorem \ref{thm:submain}.

The assumptions on the
order of the amplitude in Theorem \ref{thm:main}
can be relaxed
if we work with functions with
compact support. We will assume that the supports are
uniformly bounded but will still allow them to move to
infinity (while remaining bounded). In this 
situation the proof of 
Theorem \ref{thm:main} will also imply the following
\begin{thm}\label{COR:small-Q}
Let $1<p<\infty$ and let $m,\mu\in\R$.   
        Let $\cF$ be the Fourier 
        integral operator \eqref{eq:FIO} as in
        Theorem \ref{thm:main}.
        Let $R>0$.
 Let $\cV(\Rn)\subset L^p(\Rn)$ be a set of all functions
 $f\in L^p(\Rn)$ such that for every $f\in\cV(\Rn)$
 there exists $y\in\Rn$ such that
 $\supp f\subset B_R(y)$, and let $\cV(\Rn)$ have the topology
 induced by $L^p(\Rn)$. 
        Then operator $\cF$ extends to a 
        continuous operator from $\cV(\R^n)$ to 
        $L^p(\R^n)$, provided that
    \begin{equation}\label{EQ:m-mu2}
     m\le-(n-1)\left|\frac{1}{p}-\frac12\right| 
        \textrm{ and }
         \mu\le-(n-1)\left|\frac{1}{p}-\frac12\right|.
    \end{equation}    
\end{thm}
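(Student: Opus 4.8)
\medskip
\noindent
The plan is to rerun the argument that proves Theorem~\ref{thm:main}, keeping track of how the a priori bound $R$ on the radius of the support enters, and to note that this bound is precisely what switches off the mechanism which, in Theorem~\ref{thm:main}, forces the stronger requirement $m\le-n|1/p-1/2|$. Recall that Theorem~\ref{thm:main} is obtained by analytic interpolation between the $L^2(\Rn)$-boundedness of $\cF$ (valid for $m,\mu\le0$) and the Hardy space estimate of Theorem~\ref{thm:submain} (valid for $m\le-n/2$, $\mu\le-(n-1)/2$), applied both to $\cF$ and to its transpose $\cF^{t}$ --- again a Fourier integral operator with non-degenerate phase, to which the same analysis applies --- together with a duality argument for $p>2$. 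It therefore suffices to prove, with constants allowed to depend on $R$ but not on the centre $y$ of the ball, the two endpoint statements: (a) for every $y\in\Rn$, $\cF$ maps $\set{f\in L^2(\Rn):\supp f\subset B_R(y)}$ boundedly to $L^2(\Rn)$ when $m,\mu\le0$ --- which is already contained in Theorem~\ref{thm:main}; and (b) under the hypotheses of Theorem~\ref{thm:main} with $m\le-(n-1)/2$ and $\mu\le-(n-1)/2$, $\cF$ maps $H^1(\Rn)\cap\set{f:\supp f\subset B_R(y)}$ into $L^1(\Rn)$ with norm bounded in terms of $R$, together with the dual statement in which it is the output of $\cF^{t}$, rather than the input, that is confined to a ball of radius $R$. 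Interpolating (a) with (b) gives the case $1<p\le2$ of the theorem with the thresholds~\eqref{EQ:m-mu2}, the dual versions give the range $2\le p<\infty$, and since $\cC_c^\infty(\Rn)\cap\cV(\Rn)$ is dense in $\cV(\Rn)$ this yields the asserted continuity.

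To prove (b), decompose $f\in H^1(\Rn)$ with $\supp f\subset B_R(y)$ into $H^1$-atoms, $f=\sum_j\1_j a_j$ with $\sum_j|\1_j|\prec\|f\|_{H^1(\Rn)}$; a Whitney-type decomposition of $B_R(y)$ lets one arrange that each $a_j$ is supported in a ball $B_{\rho_j}(y_j)\subset B_{2R}(y)$, so that $\rho_j\le 2R$ and $\jp{y_j}\sim\jp{y}$ with implied constants depending only on $R$. It then suffices to bound $\|\cF a\|_{L^1(\Rn)}$ by a constant depending only on $R$, uniformly over normalised atoms $a$ supported in a ball $B_\rho(y_0)$ with $\rho\le 2R$. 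Following the proof of Theorem~\ref{thm:submain}, one splits the $\xi$-integral in~\eqref{eq:FIO} into the contribution of small frequencies $|\xi|\prec\rho^{-1}$, handled by the $L^2$-bound together with H\"older's inequality on $B_\rho(y_0)$ and the moment condition on $a$, and the contribution of $|\xi|\succ\rho^{-1}$, to which one applies the space-dependent dyadic decomposition of the frequency domain into pieces of radial width $2^{-k}$ and tangential width $2^{-k/2}\jp{y}^{1/2}$, the non-degeneracy and size conditions~\eqref{eq:phf}, and repeated integration by parts. The resulting bound for $\|\cF a\|_{L^1(\Rn)}$ is a finite sum of terms of the form $C\rho^{a}\jp{y_0}^{b}$ with $a\ge0$ and $b\le m+(n-1)/2$: in the unrestricted setting of Theorem~\ref{thm:submain} the exponent $a$ may be as large as $n$ for atoms whose radius grows, which is exactly why one there imposes $m\le-n/2$ so that $\rho^{n}\jp{y_0}^{m+(n-1)/2}\prec1$ via the bound $\rho\prec\jp{y_0}$ available for the relevant atoms; once $\rho\le 2R$, however, every factor $\rho^{a}$ is a constant depending only on $R$, and the weaker requirement $m\le-(n-1)/2$ already forces $b\le0$, whence $\jp{y_0}^{b}\le1$ and each term is under control. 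The dual statement for $\cF^{t}$ is proved in the same way: localising the output of $\cF^{t}$ to $B_R(y_0)$ restricts, through the non-degeneracy and size conditions~\eqref{eq:phf}, the part of an input atom that effectively contributes to a neighbourhood (of radius bounded in terms of $R$) of an $(n-1)$-dimensional surface, which again caps the growth in the atomic radius.

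The only genuinely new point --- and the place where the proof of Theorem~\ref{thm:submain} has to be read with care --- is precisely this bookkeeping: one must check that every power of $\rho$ produced in the estimate of $\|\cF a\|_{L^1(\Rn)}$ is nonnegative, so that it becomes harmless once $\rho$ is a priori bounded, and that the accompanying power of $\jp{y_0}$ never exceeds $m+c_n$ for a dimensional constant $c_n\le(n-1)/2$. This is where I expect the main technical effort to go. By contrast, the reduction to the two endpoint estimates, the analytic interpolation, and the passage to $2\le p<\infty$ by duality are carried out exactly as in the proof of Theorem~\ref{thm:main}, once one records that $\cF^{t}$ is covered by the same analysis under the corresponding hypotheses on its amplitude and phase, and that all constants appearing are uniform in the centre $y$. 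A subsidiary, routine point to be verified is that the atomic decomposition of an $H^1(\Rn)$ function supported in $B_R(y)$ can indeed be realised with atoms confined to $B_{2R}(y)$, and that $\cC_c^\infty(\Rn)\cap\cV(\Rn)$ is dense in $\cV(\Rn)$ for the topology induced by $L^p(\Rn)$, so that the a priori estimate established on smooth functions extends to all of $\cV(\Rn)$.
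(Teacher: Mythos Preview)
Your overall strategy is the same as the paper's: show that the $H^1\to L^1$ estimate of Theorem~\ref{thm:submain} goes through under the weaker assumption $m\le-(n-1)/2$ once all atoms have sidelength uniformly bounded by $\sim R$, and then interpolate with the $L^2$ bound (plus duality for $p>2$). In the paper this is exactly the content of Remarks~\ref{REM:large-Q} and~\ref{REM:small-Q}, which isolate the two places in Section~\ref{sec:proof} where $m\le-n/2$ is invoked and explain why each can be relaxed when the atomic sidelength is capped.

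Your detailed bookkeeping, however, does not match the actual proof and is internally inconsistent. The estimate for $\|\cF a_Q\|_{L^1}$ is \emph{not} a sum of terms $C\rho^a\jp{y_0}^b$; in particular no factor $\rho^n$ ever appears, and your proposed mechanism ``$\rho^n\jp{y_0}^{m+(n-1)/2}\prec1$ via $\rho\prec\jp{y_0}$'' would force $m\le-(3n-1)/2$, not $-n/2$. The correct picture is this. For atoms with small sidelength $q<1$, look at Propositions~\ref{PROP:NQ} and~\ref{PROP:NQc}: the estimate on the exceptional set uses the weight $M_{n/2}$ together with $|\cN_Q|\prec|Q|^{1/n}\jp{y_0}^{n-1}$ and $\jp{x}^{-n}\sim\jp{y_0}^{-n}$, which leaves a surplus factor $\jp{y_0}^{-1/2}$; the off-exceptional estimate likewise gains $\jp{y}^{-1/2}$ after the angular summation over $N(y,k)\sim 2^{k(n-1)/2}\jp{y}^{(n-1)/2}$ pieces. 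In both places one may therefore replace $M_{n/2}$ by $M_{(n-1)/2}$, and the argument closes with $m\le-(n-1)/2$ --- this is Remark~\ref{REM:small-Q}. For atoms with $1\le q\le R$, the issue is not a power of $\rho$ but the H\"older/$L^2$ step in Proposition~\ref{PROP:large-Q}: with $q$ confined to a bounded range the output set $D_{q,y_0}$ is controlled, and the split $I_1+I_2$ can be bounded with the weaker hypothesis on $m$ --- this is Remark~\ref{REM:large-Q}. You should redo the ``main technical effort'' along these lines rather than the $\rho^a\jp{y_0}^b$ heuristic. Your remarks on the atomic decomposition localised to $B_{2R}(y)$, density, and the duality passage to $p>2$ are reasonable side points (the paper does not spell out the last of these).
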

Theorem \ref{COR:small-Q} will follow from Remarks
\ref{REM:large-Q} and \ref{REM:small-Q}. 
We also have natural counterparts of Theorem \ref{COR:small-Q}
for $H^1$ and $W^{\sigma,p}_{s}$ as in Theorems
\ref{thm:submain} and \ref{thm:submain-Sobolev}.

Finally, by an argument similar to 
the one used in \cite{CNR08}, it is also possible to prove 
the $L^p$-continuity of the classes of 
Fourier integral operators considered in \cite{Co99}, 
where the phase function is assumed positively 
homogeneous of order $1$ in $\xi$ and satisfies 
\eqref{eq:phf}:

\begin{thm}\label{thm:main2}
Let $A=A_{\varphi,a}$ be a Fourier integral operator of the form
\begin{equation}\label{EQ:fio2}
        Au(x)=\int_\Rn 
        e^{i\varphi(x,\xi)} a(x,\xi) \widehat{u}(\xi)d\xi,
\end{equation}
with a real-valued phase function $\varphi$ such that 
$\va(y,\tau\xi)=\tau\va(y,\xi)$ 
        for all $\tau>0$ and $\xi\not=0$, 
        and assume that the condition \eqref{eq:phf} 
        holds true for all $x\in\Rn$ and $\xi\in\Rn\backslash 0$.
         Moreover, assume that $\xi\not=0$ on the 
         support of the amplitude $a$, 
         and that $a\in S^{m,\mu}$, i.e. that
\[
        \partial_x^\alpha\partial_\xi^\beta
        a(x,\xi)
        \prec \norm{x}^{m-|\alpha|}\norm{\xi}^{\mu-|\beta|},
\]
for all $x,\xi\in\R^n$ and all multi-indices 
$\alpha,\beta$, with some $m, \mu\in\R$. 
Then, $A$ extends to a bounded operator from $L^p(\R^n)$ 
to itself, provided that
    \begin{equation}\label{EQ:sharp-thresholds}
     m\le-(n-1)\left|\frac{1}{p}-\frac12\right| 
        \textrm{ and }
         \mu\le-(n-1)\left|\frac{1}{p}-\frac12\right|.
    \end{equation}
\end{thm}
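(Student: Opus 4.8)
The plan is to obtain the $L^p$ bound by complex interpolation between an $L^2$ estimate and an $H^1\to L^1$ estimate, and then to pass to $2\le p<\infty$ by duality. The only genuinely new ingredient is an $H^1(\R^n)\to L^1(\R^n)$ bound for operators \eqref{EQ:fio2} in which the threshold on the order in $x$ is $-(n-1)/2$ rather than the $-n/2$ of Theorem \ref{thm:submain}; this improvement is available precisely because the amplitude $a(x,\xi)$ does not depend on the ``curved'' variable and, since $m\le -(n-1)\left|\frac{1}{p}-\frac{1}{2}\right|\le 0$, does not grow in $x$.

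Concretely, I would first record the $L^2$ bound: when $m\le 0$ and $\mu\le 0$ the operator $A=A_{\varphi,a}$ is bounded on $L^2(\R^n)$; since $S^{m,\mu}\subseteq S^{0,0}$ in this range, this is the global $L^2$ theory for phases satisfying \eqref{eq:phf} of Asada--Fujiwara \cite{AF78}, Coriasco \cite{Co99} and Ruzhansky--Sugimoto \cite{RS06a}. Next I would establish the key claim: if $m\le -(n-1)/2$ and $\mu\le -(n-1)/2$, then $A$ extends to a bounded operator from $H^1(\R^n)$ to $L^1(\R^n)$. The proof follows the scheme of Theorem \ref{thm:submain} --- atomic decomposition of $H^1$, separate treatment of atoms with large and with small supports, and the space-dependent dyadic decomposition in the frequency variable --- but is simpler owing to the structure of $a$. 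For an atom supported in a ball $B_r(y_0)$, $Ag$ concentrates on the set of $x$ for which $\nabla_\xi\varphi(x,\omega)\in B_r(y_0)$ for some $\omega\in\Sn$, i.e.\ on a neighbourhood of a hypersurface on which $\langle x\rangle\sim\langle y_0\rangle$ and whose $n$-volume is $\sim r\langle y_0\rangle^{\,n-1}$; on the complementary region one has $|\nabla_\xi\varphi(x,\xi)|\sim\langle x\rangle\gg\langle y_0\rangle\ge|y|$, so non-stationary phase in $\xi$, together with the decay of $a$ in $x$, gives a harmless contribution --- this is exactly the place where the $x$-growth of a general amplitude of Theorem \ref{thm:main} would have forced the worse exponent $-n/2$. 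On the concentration region the amplitude is $\sim\langle y_0\rangle^{m}$, the $n$-volume carries only the tangential power $\langle y_0\rangle^{\,n-1}$, and the Cauchy--Schwarz step together with the cancellation of the atom and the $L^2$ bound reproduces exactly the Seeger--Sogge--Stein local count, giving uniform control precisely under $m\le -(n-1)/2$ and $\mu\le -(n-1)/2$; the global summation of the pieces over the lattice of centres $y_0$ (equivalently, over dyadic shells in $x$) is carried out as in \cite{CNR08}.

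Interpolating the $L^2$ bound with this $H^1\to L^1$ bound then yields $A\colon L^p(\R^n)\to L^p(\R^n)$ for $1<p\le 2$ under \eqref{EQ:sharp-thresholds}. For $2\le p<\infty$ I would argue by duality: the adjoint $A^*$ is exactly an operator of the form \eqref{eq:FIO} with the same phase $\varphi$ and amplitude $\overline{a(y,\xi)}$, which is independent of $x$ and satisfies \eqref{eq:amplf-decay2} with $m_1=0$, $m_2=m$, while \eqref{eq:thm-phi2} follows on $\supp a$ from \eqref{eq:phf} and the homogeneity of $\varphi$; thus $A^*$ falls under hypothesis (III) of Theorem \ref{thm:main}. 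The same two estimates --- the $L^2$ bound for $m,\mu\le 0$ and the improved $H^1\to L^1$ bound for $m,\mu\le -(n-1)/2$, the latter again valid because $m_1=0$ prevents any growth in $x$ --- apply to $A^*$, and interpolation gives $A^*\colon L^{p'}\to L^{p'}$ under \eqref{EQ:sharp-thresholds}, hence $A\colon L^{p}\to L^{p}$. Sharpness of both exponents is inherited from the local theory of Seeger--Sogge--Stein \cite{SSS91} by taking $a$ compactly supported in $x$, and the SG pseudo-differential and Fourier integral calculi of \cite{Co99,RS06b,RS07} are used to absorb the lower-order remainders produced by the reductions.

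I expect the main obstacle to lie in this last global assembly inside the $H^1\to L^1$ step: once the atom has been decomposed frequency by frequency and the operator localised both in $x$ and in the dyadic boxes adapted to the phase, the pieces must be recombined over all of $\R^n$ without forfeiting the gain from $-n/2$ to $-(n-1)/2$, which requires a sharp, $L^1$-valued almost-orthogonality in place of a crude triangle inequality over the lattice --- the point at which the independence of $a$ from the second spatial variable must be exploited to its full strength.
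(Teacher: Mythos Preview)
Your proposal is plausible but takes a genuinely different route from the paper's.

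The paper does \emph{not} prove Theorem~\ref{thm:main2} by interpolating between an $L^2$ bound and an improved $H^1\to L^1$ bound. Instead it argues directly in $L^p$ by a dyadic decomposition in the spatial variable. One takes a Littlewood--Paley partition $\{\psi_k\}$ in $x$ and observes that, after conjugation by the dilation $U_{2^{-k}}$, the localised operator $\psi_k A$ becomes an operator $A'_k$ with phase $\varphi(2^kx,2^{-k}\xi)$ and amplitude $\tp(x)\,a(2^kx,2^{-k}\xi)$; on the support of $\tp$ one has $\jp{2^kx}\sim 2^k$, and since the two thresholds in \eqref{EQ:sharp-thresholds} are \emph{equal} (both $m_p=-(n-1)|1/p-1/2|$), the powers of $2^k$ coming from the $x$- and $\xi$-scalings cancel exactly, so that $A'_k$ satisfies the hypotheses of the local Seeger--Sogge--Stein theorem uniformly in $k$ (Proposition~\ref{prop:dyadicFIO}). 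The global assembly is then done by writing $\psi_k A\psi_{k'}=A_{k,k'}+2^{-k-k'}R_{k,k'}$ via the SG composition calculus of \cite{Co99,RS06b,RS07}; the main terms $A_{k,k'}$ vanish for $|k-k'|>N$, which gives the needed almost-orthogonality, and the remainder sum is absolutely convergent.

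Your approach would in principle work --- the adjoint $A^*$ is indeed of the form \eqref{eq:FIO} with $m_1=0$, and Remark~\ref{REM:small-Q} already records that the small-cube estimates (Propositions~\ref{PROP:NQ} and \ref{PROP:NQc}) hold with $m\le -(n-1)/2$; only the large-cube case (Proposition~\ref{PROP:large-Q}) genuinely uses $m\le -n/2$, and that is precisely where the absence of $x$-growth should help. But the paper's scaling argument is much shorter and sidesteps the Hardy-space analysis entirely: it exploits directly the fact that the SG symbol class $S^{m_p,m_p}$ is invariant under the joint dilation $(x,\xi)\mapsto(2^kx,2^{-k}\xi)$, which is exactly what makes the symmetric threshold \eqref{EQ:sharp-thresholds} natural. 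Note also that your citation of \cite{CNR08} for the ``global summation'' actually points toward the paper's method rather than yours --- the argument ``similar to the one used in \cite{CNR08}'' announced before Theorem~\ref{thm:main2} \emph{is} the dyadic-scaling proof of Section~\ref{sec:sketch}.
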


The thresholds \eqref{EQ:sharp-thresholds} are sharp, 
by a modification of a counterexample described in 
\cite{CNR08}.  The improvement
in Theorem \ref{thm:main2} compared to that in
Theorem \ref{thm:main}, (III), comes from the independence
of the amplitude of $A$ on $y$-variable, if we write 
the adjoint $A^*$ in the form of an operator $\cF$ in
Theorem \ref{thm:main}.
The proof of Theorem 
\ref{thm:main2} is given in Section \ref{sec:sketch}. Finally, the composition formulae in \cite{Co99} together with Theorem \ref{thm:main2} imply the analog
of Theorem \ref{thm:submain-Sobolev} for the operator $A$:
\begin{thm}
        \label{thm:submain-Sobolev2}
    Let $1<p<\infty$ 
        and let $\sigma,s\in\R$.     
        Let $A$ be the Fourier 
        integral operator \eqref{eq:SGFIO} as in
        Theorem \ref{thm:main2} with
        orders $m,\mu\in\R$, and let
        $m_p= -(n-1)\left|\frac{1}{p}-\frac{1}{2}\right|$. 
        Then operator $A$ extends to a 
        bounded operator from $W^{\sigma,p}_s(\R^n)$ to 
        $W^{\sigma-\mu-m_p,p}_{s-m-m_p}(\R^n)$.
\end{thm}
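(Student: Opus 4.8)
The plan is to derive the statement from Theorem \ref{thm:main2} together with the composition formulae for SG Fourier integral operators and SG pseudo-differential operators established in \cite{Co99}, in the same way that Theorem \ref{thm:submain-Sobolev} is derived from Theorem \ref{thm:main} via the corresponding formulae in \cite{RS06b,RS07}. The key remark is that, for $\pi_{s,\sigma}(x,\xi)=\jp{x}^{s}\jp{\xi}^{\sigma}$, the operator $\mathrm{Op}(\pi_{s,\sigma})$ is a globally elliptic SG pseudo-differential operator of order $(s,\sigma)$; by the SG-calculus and its $L^{p}$ mapping theory it is invertible and realises a topological isomorphism $\mathrm{Op}(\pi_{s,\sigma})\colon W^{\sigma,p}_{s}(\Rn)\to L^{p}(\Rn)$, and its inverse coincides, up to a regularising operator, with an SG pseudo-differential operator of order $(-s,-\sigma)$. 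Consequently, if $s'$ and $\sigma'$ denote the weight and the Sobolev exponent of the target space in the statement, the asserted boundedness $A\colon W^{\sigma,p}_{s}(\Rn)\to W^{\sigma',p}_{s'}(\Rn)$ is equivalent to the $L^{p}(\Rn)$-boundedness of the conjugated operator
\[
B:=\mathrm{Op}(\pi_{s',\sigma'})\circ A\circ\bigl(\mathrm{Op}(\pi_{s,\sigma})\bigr)^{-1},
\]
so it is enough to bring $B$ into the scope of Theorem \ref{thm:main2}.

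First I would identify $B$ using the composition calculus of \cite{Co99}. Since $\varphi$ is positively homogeneous of degree one in $\xi$ and satisfies \eqref{eq:phf} for all $x\in\Rn$ and $\xi\neq0$, it is an admissible phase for that calculus, and composing an operator of the form \eqref{eq:SGFIO} with SG pseudo-differential operators on the left and on the right produces again an operator \eqref{eq:SGFIO} with the \emph{same} phase $\varphi$ and with an amplitude $c$ whose SG-order is that of $a$ shifted by the orders of the two conjugating factors, that is $c\in S^{m'',\mu''}$ with $m''=s'+m-s$ and $\mu''=\sigma'+\mu-\sigma$. Here the additivity of the orders uses the comparisons $\jp{\partial_{x}\varphi(x,\xi)}\sim\jp{\xi}$ and $\jp{\nabla_{\xi}\varphi(x,\xi)}\sim\jp{x}$ contained in \eqref{eq:phf}, which guarantee that the $\xi$-weight coming from the left factor and the $x$-weight coming from the right factor transfer correctly through the phase; moreover every term of the resulting asymptotic expansion carries a factor obtained from $a$ by $x$- and $\xi$-differentiation, so it is still supported in $\{\xi\neq0\}$, up to a regularising remainder. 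The weight and the exponent of the target space in the statement are precisely those for which $m''$ and $\mu''$ both equal the critical order $-(n-1)\left|\frac{1}{p}-\frac{1}{2}\right|$ appearing in \eqref{EQ:sharp-thresholds}; for that choice Theorem \ref{thm:main2} applies to the leading part of $B$ and yields its $L^{p}(\Rn)$-boundedness, while the regularising remainder is harmless because SG-regularising operators map every weighted Sobolev space $W^{\sigma,p}_{s}(\Rn)$ continuously into every $W^{\widetilde\sigma,p}_{\widetilde s}(\Rn)$. Combining these facts gives the claimed boundedness of $A$.

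The point requiring genuine care, and which I expect to be the main obstacle, is the precise invocation of the SG-calculus of \cite{Co99}: one must check that $\varphi$ meets the phase hypotheses used there, that composing the SG Fourier integral operator with the operators $\mathrm{Op}(\pi_{s',\sigma'})$ and $\bigl(\mathrm{Op}(\pi_{s,\sigma})\bigr)^{-1}$ --- which are elliptic but neither properly supported nor compactly supported in $\xi$ --- keeps one within the class of operators \eqref{eq:SGFIO} having amplitude in the expected SG-class and with $\xi\neq0$ on the support, and that the remainder terms in the asymptotic expansions are negligible for the weighted $L^{p}$ estimate. As in Theorem \ref{thm:submain-Sobolev}, only the very special pseudo-differential operators with symbols $\pi_{s,\sigma}$ are needed, so nothing beyond the composition formulae of \cite{Co99} and the already established Theorem \ref{thm:main2} enters; the entire analytic content of the statement is carried by the latter.
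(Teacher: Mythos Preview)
Your proposal is correct and follows the same route as the paper, which derives the statement in a single sentence from Theorem \ref{thm:main2} via the SG composition formulae of \cite{Co99}; your write-up simply makes explicit the conjugation by $\mathrm{Op}(\pi_{s,\sigma})$ that the paper leaves implicit (and already used for Theorem \ref{thm:submain-Sobolev}).
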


\section{Proof of Theorem \ref{thm:submain}}
\label{sec:proof}

Since Theorem \ref{thm:main} follows by complex
interpolation from Theorem \ref{thm:submain} and
$L^2$-boundedness results in \cite{AF78} and
\cite{RS06a} under assumptions (I) and (II)--(III), respectively,
we need to prove Theorem \ref{thm:submain}.
This will be achieved through various subsequent steps.\\

Given $f\in H^1(\R^n)$, we can decompose
(see e.g. \cite{St93}) function
$\displaystyle f=\sum_Q\lambda_Q a_Q$, where 
$\sum_Q |\lambda_Q| \simeq \|f\|_{H^1(\R^n)}$ and the atoms 
$a_Q\in H^1(\R^n)$ have the following properties:
\begin{enumerate}
        \item $\supp a_Q \subset Q$, where 
        $Q\subset\R^n$ is a cube of sidelength $q$;
        \item $\|a_Q\|_{L^\infty(\R^n)}\le |Q|^{-1}$;
        \item $\displaystyle\int_Q a_Q(y) \, dy = 0$.
\end{enumerate}

Theorem \ref{thm:submain} would then follow
if we show that 
\begin{equation}
        \label{eq:estim}
        \|\cF a_Q\|_{L^1(\R^n)}\le C,
\end{equation} 
for a constant $C$ independent of $a_Q$.

%\vspace{0.5cm}
Let $F=F(x,y)$ denote the distribution kernel 
of $\cF$, given by the oscillatory integral
\begin{equation}
        \label{eq:F}
        F(x,y)=\int_\Rn e^{i[\langle x,\xi\rangle 
        - \varphi(y,\xi)]} b(x,y,\xi) \, d\xi.
\end{equation}

We begin showing that the amplitude function can 
be assumed supported only in a suitable neighbourhood of 
the wave front set of the distributional kernel of $\cF$:
\begin{prop}
        Let $\chi=\chi(x,y,\xi)$ be supported in
        $E_k=\{(x,y,\xi)\in\R^n\times\R^n\times\R^n \,\colon\,
        |x-\nabla_\xi\varphi(y,\xi)| \le k \norm{x} \}$, 
        $k\in(0,1)$ suitably 
        small, and such that $\left.\chi\right|_{E_\frac{1}{2}}
        \equiv 1$.
        Moreover\footnote{
                With $h\in\cC^\infty(\R)$
                such that $\left.h\right|_{(-\infty,\frac{1}{2})}
                \equiv 1$ and
                $\left.h\right|_{(1,+\infty)}\equiv 0$, 
                $k\in(0,1)$, set
                \begin{equation*}
                        \chi(x,y,\xi)=
                        h\left(\frac{|x-\nabla_\xi\varphi(y,\xi)|}{k\norm{x}}\right).
                \end{equation*}
        }, let us assume that $\chi$ (is smooth and) 
        satisfies $S^{0,0,0}$ estimates on $\supp b$, 
        and set $\widetilde{b}=(1-\chi) b$. Then, defining
        \begin{equation}
                \label{eq:tF}
                \widetilde{F}(x,y)=\int_\Rn
                e^{i[\langle x,\xi\rangle - \varphi(y,\xi)]} \widetilde{b}(x,y,\xi)\,d\xi,
        \end{equation}
        it follows 
        that $\widetilde{F}\in\cS(\R^n\times\R^n)$, which implies
        that
               \begin{equation}
                                \label{eq:tFaQ}
                                \int_\Rn \left|\int_\Rn
      \widetilde{F}(x,y)\, a_Q(y)\,dy\right|\,dx \le C,
                \end{equation}
                with a constant $C$ independent of $a_Q$.
\end{prop}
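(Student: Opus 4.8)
The plan is to show that $\widetilde{b}$ is, in a suitable quantitative sense, a non-stationary-phase amplitude, so that integration by parts in $\xi$ produces arbitrary decay and $\widetilde{F}\in\cS(\R^n\times\R^n)$; the estimate \eqref{eq:tFaQ} then follows trivially from $\|a_Q\|_{L^1}\le 1$ (from properties (1) and (2) of the atoms) and Fubini. First I would record that on $\supp\widetilde{b}$ we have $|x-\nabla_\xi\varphi(y,\xi)|\ge \tfrac12 k\norm{x}$, which is the region away from the wave front set. Since $\nabla_\xi\varphi(y,\xi)$ is homogeneous of degree $0$ in $\xi$, on this region $|\nabla_\xi(\langle x,\xi\rangle-\varphi(y,\xi))|=|x-\nabla_\xi\varphi(y,\xi)|\gtrsim\norm{x}$, but I also need to compare this lower bound with $\norm{y}$, using the hypothesis $\norm{\nabla_\xi\varphi(y,\xi)}\sim\norm{y}$ from \eqref{eq:phf}: indeed, if $\norm{x}\le \tfrac12\norm{\nabla_\xi\varphi}$ then $|x-\nabla_\xi\varphi|\ge\tfrac12\norm{\nabla_\xi\varphi}\sim\norm{y}$, while if $\norm{x}\ge\tfrac12\norm{\nabla_\xi\varphi}$ then $\norm{x}\gtrsim\norm{y}$ too; in either case $|x-\nabla_\xi\varphi(y,\xi)|\gtrsim\norm{x}+\norm{y}$ on $\supp\widetilde{b}$.

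Next I would introduce the standard first-order operator
\[
  L=\frac{1}{|x-\nabla_\xi\varphi(y,\xi)|^2}\,\langle x-\nabla_\xi\varphi(y,\xi),\,D_\xi\rangle,
\]
which satisfies ${}^tL\,e^{i[\langle x,\xi\rangle-\varphi(y,\xi)]}=e^{i[\langle x,\xi\rangle-\varphi(y,\xi)]}$, and integrate by parts $N$ times in \eqref{eq:tF}. The key point is to control the $\xi$-derivatives that hit the coefficient of $L$ and the amplitude. The derivatives of $\widetilde{b}=(1-\chi)b$ in $\xi$ cost $\norm{\xi}^{-1}$ each by \eqref{eq:amplf} (and the analogous bounds under (II), (III)) together with the $S^{0,0,0}$ bounds on $\chi$ and the bound $|\nabla_\xi\varphi|\prec 1$ coming from homogeneity degree $0$ and \eqref{EQ:phf-add}/\eqref{eq:thm-phi1}/\eqref{eq:thm-phi2}; derivatives of $x-\nabla_\xi\varphi(y,\xi)$ are bounded by the same hypotheses. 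Dividing by $|x-\nabla_\xi\varphi(y,\xi)|^2\gtrsim(\norm{x}+\norm{y})^2$ at each step, one gets
\[
  |\widetilde{F}(x,y)|\le C_N\int_{\supp\widetilde b(x,y,\cdot)}
  \frac{\norm{x}^{m_1}\norm{y}^{m_2}\norm{\xi}^{\mu}}
       {(\norm{x}+\norm{y})^{N}\,\norm{\xi}^{?}}\,d\xi,
\]
and choosing $N$ large relative to $n+\mu+|m_1|+|m_2|$ makes the $\xi$-integral converge and yields $|\widetilde F(x,y)|\le C_N(\norm{x}+\norm{y})^{-N+|m_1|+|m_2|}$; since one may also differentiate $\widetilde F$ in $x$ and $y$ any number of times before doing this (the $x$- and $y$-derivatives of the phase and of $\widetilde b$ again only produce bounded factors, using homogeneity in $\xi$ and the remaining estimates in \eqref{eq:phf} such as $\partial_y^\alpha\varphi\prec\norm{y}^{1-|\alpha|}|\xi|$ combined with the $|\xi|$-localization coming from $|x-\nabla_\xi\varphi|\sim\norm{x}+\norm{y}$ being incompatible with $|\xi|$ too large on the relevant set, or simply absorbing polynomial growth into the large negative power), we conclude $\widetilde F\in\cS(\R^n\times\R^n)$.

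Finally, given $\widetilde F\in\cS(\R^n\times\R^n)$, for fixed $x$ the function $y\mapsto\widetilde F(x,y)$ is in $L^\infty_y$ with $\sup_y|\widetilde F(x,y)|\le C_N\norm{x}^{-N}$ for every $N$, so
\[
  \int_\Rn\Bigl|\int_\Rn\widetilde F(x,y)\,a_Q(y)\,dy\Bigr|\,dx
  \le \|a_Q\|_{L^1(\R^n)}\int_\Rn\sup_{y}|\widetilde F(x,y)|\,dx
  \le C\int_\Rn\norm{x}^{-n-1}\,dx=C',
\]
using $\|a_Q\|_{L^1}\le|Q|\cdot|Q|^{-1}=1$ by atom properties (1)–(2), and $C'$ is independent of $Q$. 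The main obstacle is the bookkeeping in the second paragraph: one must check that every derivative landing on the phase or on the $L$-coefficient is genuinely bounded (this is exactly where the homogeneity of $\varphi$ in $\xi$ together with \eqref{EQ:phf-add} — or its replacements \eqref{eq:thm-phi1}, \eqref{eq:thm-phi2} under hypotheses (II), (III) — is used, and where the different growth allowances in $x$ versus $y$ in \eqref{eq:amplf-decay1} and \eqref{eq:amplf-decay2} must be tracked so that the polynomial prefactors $\norm{x}^{m_1}\norm{y}^{m_2}$ are always dominated by the gained negative power of $\norm{x}+\norm{y}$).
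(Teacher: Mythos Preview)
Your proposal is correct and follows essentially the same route as the paper: both arguments exploit the lower bound $|x-\nabla_\xi\varphi(y,\xi)|\succ\norm{x}$ on $\supp\widetilde b$, upgrade it to $|x-\nabla_\xi\varphi(y,\xi)|\succ\norm{y}$ via the triangle inequality and $\norm{\nabla_\xi\varphi}\sim\norm{y}$, integrate by parts in $\xi$ using the same first-order operator $L$, and then conclude \eqref{eq:tFaQ} from $\widetilde F\in\cS$ together with $\|a_Q\|_{L^1}\le 1$. The only cosmetic issues are the placeholder ``$\norm{\xi}^{?}$'' (each application of $L$ indeed gains a factor $|\xi|^{-1}$, since $\partial_\xi$-derivatives of both $\widetilde b$ and the coefficient $(x-\nabla_\xi\varphi)/|x-\nabla_\xi\varphi|^2$ produce such decay, the latter via $|\partial_\xi^\alpha\nabla_\xi\varphi|\prec\norm{y}|\xi|^{-|\alpha|}$ combined with $|x-\nabla_\xi\varphi|\succ\norm{y}$) and the stray remark about ``$|\xi|$-localization'', which is not needed---your alternative of absorbing polynomial growth into the large negative power is the correct mechanism and is exactly what the paper does.
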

\begin{proof} We will show that kernel
        $\widetilde{F}$ satisfies 
        \begin{equation}\label{EQ:F-Schwartz}
        \partial^\alpha_x\partial^\beta_y\widetilde{F}(x,y)
        \prec (\norm{x}\norm{y})^
        {-N},
        \end{equation} 
         for all $N\in\N$, $x,y\in\R^n$ and
        all multi-indices $\alpha,\beta$. 
        By the hypotheses on $b$ and
        $\varphi$, it is clear that it is 
        enough to prove the estimate only 
        for $\alpha=\beta=0$ and arbitrary order 
        in $x,y,\xi$ for $\widetilde{b}$.\\
        \noindent
        Indeed, $|x-\nabla_\xi\varphi(y,\xi)|\succ \norm{x}$ 
        on $\supp \widetilde{b}$,
        so that the operator $L_\xi$, acting on functions 
        $v=v(x,y,\xi)$ with respect to $\xi$ as
        \begin{equation*}
                (L_\xi v)(x,y,\xi)=
                \sum_{j=1}^n i \partial_{\xi_j}
                                                \left(
                                                        \frac{x_j-\partial_{\xi_j}\varphi(y,\xi)}
                                                               {|x-\nabla_\xi\varphi(y,\xi)|^2}
                                                        v(x,y,\xi)
                                                \right),
        \end{equation*}
        is well defined on $\supp\widetilde{b}$. 
        Moreover, on $\supp\widetilde{b}$,
        we have
        \begin{equation*}
                |x-\nabla_\xi\varphi(y,\xi)|\succ \norm{x}.
        \end{equation*}
        Then $|\nabla_\xi\va(y,\xi)|\leq |x-\nabla_\xi\va(y,\xi)|+
         |x|\prec |x-\nabla_\xi\va(y,\xi)|$, and it follows
         that we also have
          \begin{equation*}
     |x-\nabla_\xi\varphi(y,\xi)|\succ 
       \norm{\nabla_\xi\varphi(y,\xi)}\succ\norm{y}.
        \end{equation*}
        Now \eqref{EQ:F-Schwartz} follows by
        integrating by parts in \eqref{eq:tF}, observing that
        ${^tL_\xi} e^{i[\langle x,\xi\rangle - \varphi(y,\xi)]}
        =e^{i[\langle x,\xi\rangle - \varphi(y,\xi)]}$. 
        Then \eqref{eq:tFaQ} holds, since, for all $N\in\N$,
        we have
        \begin{align*}
                \int_\Rn \left|\int_\Rn\widetilde{F}(x,y)\, 
       a_Q(y)\,dy\right|dx &\le \int_\Rn \int_\Rn 
       |\widetilde{F}(x,y)|\,
       |a_Q(y)|\,dy\,dx
                \\
                &\le \widetilde{C} \int_\Rn\norm{x}^{-N}\,dx
           \int_\Rn|a_Q(y)|\,dy\le C\,|Q|\,|Q|^{-1}=C.
                \end{align*}
\end{proof}
Therefore, from now on we can then assume that
for some $k\in(0,1)$ we have
\begin{equation}\label{EQ:reduce-amp}
 \supp b \subseteq
D = \{(x,y,\xi)\in\R^n\times\R^n\times\R^n\,\colon\,
|x-\nabla_\xi\varphi(y,\xi)|\le k\norm{x}\}.
\end{equation} 
This implies that on $\supp b$ we have 
$\norm{x}\sim\norm{\nabla_\xi\varphi(y,\xi)}
\sim\norm{y}$ which in turn implies that 
$C_1 \norm{y}\le \norm{x}\le C_2 
\norm{y}$, $x,y\in\R^n$, for suitable constants $C_1,C_2>0$.
\begin{prop}\label{PROP:large-Q}
        Let $a_Q$ be an atom in $H^1(\R^n)$, 
        supported in a cube $Q\subset\R^n$ centred 
        at $y_0\in\R^n$ and with sidelength 
 $q\ge 1$ {\rm (}hence also $|Q|\ge 1${\rm )}. Then, estimate 
        \eqref{eq:estim} holds with a constant $C$ independent of
        $a_Q$.
\end{prop}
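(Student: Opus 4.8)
The plan is to estimate $\|\cF a_Q\|_{L^1}$ directly, exploiting that the cube $Q$ is large ($q\ge1$) and that, by \eqref{EQ:reduce-amp}, the amplitude is supported where $\norm{x}\sim\norm{y}\sim\norm{\nabla_\xi\va(y,\xi)}$. Write $\cF a_Q(x)=\int_Q F(x,y)\,a_Q(y)\,dy$ with $F$ as in \eqref{eq:F}. First I would split the $x$-integration into the region near the cube, say $x\in Q^{*}=\{x\colon \mathrm{dist}(x,Q)\le C q\}$ for a suitable dilate, and its complement. On $Q^{*}$ one uses the trivial bound: by the $L^2$-boundedness of $\cF$ (available from \cite{AF78} and \cite{RS06a} under the present hypotheses) together with Cauchy--Schwarz and properties (1)--(2) of the atom, $\int_{Q^{*}}|\cF a_Q(x)|\,dx \le |Q^{*}|^{1/2}\|\cF a_Q\|_{L^2}\prec |Q|^{1/2}\|a_Q\|_{L^2}\prec |Q|^{1/2}|Q|^{-1/2}=C$, using $|Q^{*}|\sim|Q|$ and $\|a_Q\|_{L^2}\le |Q|^{-1/2}$. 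This handles the local part with a constant independent of $a_Q$.

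For the region $x\notin Q^{*}$, the key is a non-stationary phase estimate on the kernel after using the cancellation \eqref{eq:estim}, property (3) of the atom. Writing $\Phi(x,y,\xi)=\langle x,\xi\rangle-\va(y,\xi)$ and subtracting the value at the centre $y_0$, one has $\cF a_Q(x)=\int_Q\bigl(F(x,y)-F(x,y_0)\bigr)a_Q(y)\,dy$, and by the mean value theorem $F(x,y)-F(x,y_0)=\int_0^1 (y-y_0)\cdot\nabla_y F(x,y_0+t(y-y_0))\,dt$. On $\supp b$ the $y$-gradient of the phase is $\nabla_y\Phi=-\nabla_y\va(y,\xi)$, which by \eqref{eq:phf} satisfies $|\nabla_y\va(y,\xi)|\sim\norm{\xi}$; meanwhile $|\nabla_\xi\Phi|=|x-\nabla_\xi\va(y,\xi)|\succ\norm{x}-\mathrm{dist}(x,Q)$, which for $x\notin Q^{*}$ is $\succ |x-y_0|$ once $Cq$ is chosen large enough (here I use $\norm{\nabla_\xi\va}\sim\norm{y}\sim\norm{y_0}$ on the support, valid since $y\in Q$ and $q\ge1$ forces $\norm{y}\sim\norm{y_0}$ only after a further easy reduction — see below). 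Integration by parts in $\xi$ using the operator ${}^tL_\xi$ as in the preceding Proposition, combined with the symbol bounds \eqref{eq:amplf}/\eqref{eq:amplf-decay1}/\eqref{eq:amplf-decay2} and the phase bounds \eqref{EQ:phf-add}/\eqref{eq:thm-phi1}/\eqref{eq:thm-phi2}, yields
\begin{equation*}
|\nabla_y F(x,y)|\prec \norm{x}^{m}\,|x-y_0|^{-N}
\end{equation*}
for every $N$, at least in the range $|x-y_0|\ge C q$; integrating this against $|y-y_0|\le Cq$ and $|a_Q|\le|Q|^{-1}$ over $Q$, and then over $x\notin Q^{*}$, gives a bound $\prec q\cdot q^{n}\cdot q^{-n}\cdot\int_{|x-y_0|\ge Cq}\norm{x}^{m}|x-y_0|^{-N}\,dx$, which is $\prec q\cdot q^{m}\cdot q^{n-N}$; choosing $N$ large (and using $m\le 0$) makes this $\le C$ uniformly in $Q$.

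The step I expect to be the main obstacle is making the phase estimate $|x-\nabla_\xi\va(y,\xi)|\succ|x-y_0|$ genuinely uniform over the (possibly huge) cube $Q$: when $q$ is large, $\norm{y}$ is not comparable to $\norm{y_0}$ throughout $Q$, so the naive comparison fails near the "far side" of the cube. The remedy is a further decomposition of $Q$ into dyadic annular pieces on which $\norm{y}$ is essentially constant, or equivalently into subcubes of controlled size, applying the $L^2$-argument and the cancellation-plus-integration-by-parts argument on each piece and summing a geometric series; the bookkeeping must be arranged so that the number of pieces times the per-piece bound still sums to a constant, which works because the off-diagonal decay $|x-y_0|^{-N}$ can absorb any polynomial loss. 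A secondary technical point is that on the support of $b$ one must verify that the full symbol $\widetilde b\cdot(\text{amplitude of }{}^tL_\xi^{M})$ still obeys the required $\norm{\xi}$-order bounds after repeated integration by parts — this is where the distinction between cases (I), (II), (III) enters, but in each case the stated hypotheses on $\partial_y^\alpha\partial_\xi^\beta\va$ are exactly what is needed to keep the amplitude in the right symbol class. Once these uniformities are secured, adding the local ($Q^{*}$) and far ($x\notin Q^{*}$) contributions gives \eqref{eq:estim}.
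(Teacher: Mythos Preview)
There is a genuine gap in the off-$Q^{*}$ part. Your non-stationary phase estimate hinges on the lower bound $|x-\nabla_\xi\va(y,\xi)|\succ|x-y_0|$ for $x\notin Q^{*}$, but this is false in general: the hypothesis $\norm{\nabla_\xi\va(y,\xi)}\sim\norm{y}$ only controls the \emph{magnitude} of $\nabla_\xi\va(y,\xi)$, not its direction, so $\nabla_\xi\va(y,\xi)$ can lie anywhere on the annulus $\{\norm{z}\sim\norm{y}\}$ and in particular can be very close to $x$ even when $x$ is far from $Q$. Concretely, take $\va(y,\xi)=-\langle y,\xi\rangle$ (which satisfies all of \eqref{eq:phf}, \eqref{EQ:phf-add}): then $\nabla_\xi\va(y,\xi)=-y$, so the singular set $\{\nabla_\xi\va(y,\xi):y\in Q\}$ is $-Q$, which for $Q$ far from the origin lies entirely outside any fixed dilate $Q^{*}$. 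For such $x\in -Q$ and $y\in Q$ one has $x-\nabla_\xi\va(y,\xi)=x+y\approx 0$, and integration by parts in $\xi$ gives no gain whatsoever. More structurally, after the reduction \eqref{EQ:reduce-amp} the amplitude is supported precisely where $|x-\nabla_\xi\va(y,\xi)|\le k\norm{x}$, so on $\supp b$ the $\xi$-gradient of the phase is \emph{small}, not large; the $L_\xi$ operator you invoke is not available there. Your proposed dyadic remedy does not address this, since the obstruction is the location of $\nabla_\xi\va(y,\xi)$ relative to $y$, not the variation of $\norm{y}$ across $Q$.

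The paper avoids this entirely by not attempting any off-diagonal kernel estimate. Instead it observes that, after \eqref{EQ:reduce-amp}, $\cF a_Q$ is supported in the set $D_{q,y_0}=\{x:C_1\norm{y}\le\norm{x}\le C_2\norm{y}\text{ for some }y\in Q\}$, and runs a single weighted Cauchy--Schwarz: since $M_{n/2}\cF$ has amplitude of order $(0,-(n-1)/2)$ (using $m\le -n/2$) and is therefore $L^2$-bounded, one gets $\|\cF a_Q\|_{L^1}\le\bigl(\int_{D_{q,y_0}}\norm{x}^{-n}dx\bigr)^{1/2}\|M_{n/2}\cF a_Q\|_{L^2}\prec\bigl(|Q|^{-1}\int_{D_{q,y_0}}\norm{x}^{-n}dx\bigr)^{1/2}$, and the remaining geometric quantity is shown to be uniformly bounded by a case split on whether $|y_0|$ is large or small compared to $q$. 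No use of the cancellation property (3), no mean value theorem, and no $\xi$-integration by parts is needed for this proposition.
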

\begin{proof}
%       By the hypotheses on the support of $b$ we have that
%       %
%       \begin{align*}
%               y\in Q\Rightarrow 
%               \norm{y}&\sim\norm{y_0}\Rightarrow 
%               C_1\norm{y_0}\le \norm{x}\le C_2\norm{y_0}
%               \\
%               &\Leftrightarrow r_1=\sqrt{C_1^2\norm{y_0}^2-1}\le|x|\le\sqrt{C_2^2\norm{y_0}^2-1}=r_2,
%       \end{align*}
%       %
%       where $y_0$ is the centre of the cube.
%       In view of this and the properties of $a_Q$ above,
%       we can assume that $|y_0|\ge R\ge 
%       1\Rightarrow r_1\ge 1$, for some fixed $R$: 
%               indeed, if the opposite is true,
%       %
%       \begin{equation*}
%               |y_0| \le R \Rightarrow |x| \le \widetilde{K} R
%               \Rightarrow \| 
%               \cF a_Q \|_{L^1(\R^n)}\le K |Q|^{-1} R^n \le A
%       \end{equation*}
%       %
%       and \eqref{eq:estim} is of course true, 
%       uniformly with respect to $y_0\in\R^n$, 
%       $|y_0|\le R$ and $q\ge 1$.\\
%       \noindent
        Let us denote by $M_s$ the multiplication operator 
        $(M_s v)(x)=\norm{x}^s v(x)$.
        From composition formulae with pseudo-differential 
        operators (see \cite{RS07}) it follows that
        operator $M_\frac{n}{2}\cF$ is then 
        a Fourier integral operator 
        with amplitude bounded in $x$ and $y$,
        and of order $-\frac{n-1}{2}$ in $\xi$.
        Consequently, operator $M_\frac{n}{2}\cF$
        is bounded on  $L^2(\R^n)$
        by \cite{AF78} under assumption (I) and
        by \cite{RS06a} under assumptions (II) and (III).
        Applying H\"older's inequality and 
        denoting 
        $D_{q,y_0}=\{x\in\R^n\;|\; C_1\norm{y}\le\norm{x}
        \le C_2\norm{y}, y\in Q\}$, we get
        \begin{align*}
                \| \cF a_Q \|_{L^1(\R^n)} &= 
                   \int\limits_{\begin{subarray}{c}
                        \norm{x}\sim\norm{y}  \\
                           y\in Q 
           \end{subarray}} |\norm{x}^{-\frac{n}{2}} 
           (M_\frac{n}{2}\cF a_Q)(x)|dx
                \\
                &\le \left(\int_{D_{q,y_0}} \norm{x}^{-n}\,dx\right)^\frac{1}{2}
                       \| (M_\frac{n}{2}\cF)a_Q\|_{L^2(\R^n)}
                \\
                &\le \widetilde{C} \| a_Q \|_{L^2(\R^n)} 
                       \left[
                       \int_{D_{q,y_0}}
                            (1+|x|^2)^{-\frac{n}{2}}\,dx\right]^\frac{1}{2}
                \\
                &= \widetilde{C} | Q |^{-1} |Q|^\frac{1}{2} 
                     \left[
                       \int_{D_{q,y_0}}
                            (1+|x|^2)^{-\frac{n}{2}}\,dx\right]^\frac{1}{2}
                \\
                &= \widetilde{C} 
                     \left[ | Q |^{-1}
                       \int_{D_{q,y_0}}
      (1+|x|^2)^{-\frac{n}{2}}\,dx\right]^\frac{1}{2}\le C,
        \end{align*}
     where $C\ge0$ does not depend on $a_Q$. Indeed,
     let us prove the boundedness of the expression in
     the last line. Let us set
     $A=1+\dfrac{|C_1^2-1|^\frac{1}{2}}{C_1}$.          
     The required boundedness is a consequence of the following
     steps:
     \begin{itemize}
       \item choose $\psi\in\cC^\infty(\R)$ supported 
       in $(-\infty,2]$, taking values in $[0,1]$, and 
       such that $\psi(t)=1$ for $t\in(-\infty,1]$.
       Set $\chi(q,y_0)=\psi\left(\dfrac{|y_0|}
       {Aq\sqrt{n}}\right)$ and let
        \begin{align*} 
          I_1&=\chi(q,y_0) \, |Q|^{-1}\int_{D_{q,y_0}} 
          (1+|x|^2)^{-\frac{n}{2}}\,dx,
                                                \\
          I_2&=(1-\chi(q,y_0))\, |Q|^{-1} \int_{D_{q,y_0}}
          (1+|x|^2)^{-\frac{n}{2}}\,dx;
        \end{align*}
   
   \item on the support of $\chi(q,y_0)$ we have 
   $|y_0| \le 2Aq\sqrt{n}$, so, for $x\in D_{q,y_0}$,
      \begin{align*}
        |x| < \norm{x}\le C_2 \norm{y} &\le C_2
        \sqrt{(|y-y_0|+|y_0|)^2+1}
                                                        \\
       &\le C_2 \sqrt{\left(\dfrac{q\sqrt{n}}{2}+2Aq
       \sqrt{n}\right)^2+1}\le K q,
      \end{align*}                       
      where $K>0$ is independent of $q\ge 1$ and 
      $y_0\in\R^n$. Then, $D_{q,y_0}\subset B_{Kq}(0)$,
      where $B_{Kq}(0)$ is the ball centred at the origin with
      radius $Kq$,  and we have
          \begin{equation*}
            I_1 \le |Q|^{-1}\,|B_{Kq}(0)|\le K^n\, |B_1(0)|=B_1,
          \end{equation*}
           with $B_1>0$ independent of $q\ge 1$, $y_0\in\R^n$;
    \item on the support of $1-\chi(q,y_0)$ 
    we have $|y_0| \ge Aq\sqrt{n}> 1$ and, for $x\in D_{q,y_0}$,
    we have
                 \begin{align*}
      \sqrt{C_1^2|y|^2+C_1^2-1}\le |x| \le 
      \sqrt{C_2^2 |y|^2 + C_2^2-1}, \,y\in Q.
                 \end{align*}       
  Note also that, on the support of $1-\chi(q,y_0)$, 
  for $y\in Q$ we have $\dfrac{|y-y_0|}{|y_0|}\le 
    \dfrac{q\sqrt{n}}{2}
    \,\dfrac{1}{Aq\sqrt{n}}=\dfrac{1}{2A}<\dfrac{1}{2}$
    and
    $$|y|\ge|y_0|-|y-y_0|=|y_0|
    \left(1-\dfrac{|y-y_0|}{|y_0|}
     \right)\ge|y_0|\left(1-\dfrac{1}{2A}\right)>
     \dfrac{|y_0|}{2}>\dfrac{1}{2}.$$
   Hence we can estimate
   \begin{align*}
        C_1^2|y|^2+C_1^2-1&\ge |y_0|^2\left[C_1^2
        \left(1-\dfrac{|y-y_0|}{|y_0|}\right)^2+
        \dfrac{C_1^2-1}{|y_0|^2}\right]
          \\
       &\ge|y_0|^2\left[C_1^2\left(1-\dfrac{1}{2A}
       \right)^2-\dfrac{|C_1^2-1|}{|y_0|^2}\right]
           \\
       &\ge|y_0|^2\left[\dfrac{C_1^2(2A-1)^2}{4A^2}-
       \dfrac{|C_1^2-1|}{A^2q^2n}\right]
                                \\
       &\ge|y_0|^2\dfrac{C_1^2q^2n\left(1+
       \dfrac{2|C_1^2-1|^\frac{1}{2}}{C_1}\right)^2-
       4|C_1^2-1|}{4A^2q^2n}
                                \\
       &\ge|y_0|^2\dfrac{q^2n(C_1^2+
       4C_1|C_1^2-1|^\frac{1}{2})}{4A^2q^2n}>0,
       \end{align*}
 from which we get that
 $$r_1:=\min_{y\in Q}\sqrt{C_1^2|y|^2+C_1^2-1}
 \ge K_1|y_0|>0,$$     
  with $\dfrac{3C_1}{2}>K_1>0$ independent of $q\ge1$, 
  $y_0\in\R^n$. 
  Since $C_2\ge C_1$, on the support of $1-\chi(q,y_0)$ 
  we have $C_2^2 |y|^2 + C_2^2-1>0$, and
   \begin{align*}
    \sqrt{C_2^2 |y|^2 + C_2^2-1}&\le \sqrt{C_2^2 
    (|y_0|+|y-y_0|)^2 + C_2^2}
                                                        \\
   &\le C_2|y_0|\sqrt{\left(1+\dfrac{|y-y_0|}{|y_0|}\right)^2+
   \dfrac{1}{|y_0|^2}}
                                \\
    &\le 2C_2|y_0|,
   \end{align*}
  so that $\displaystyle r_1<r_2:=\max_{y\in Q}
  \sqrt{C_2^2 |y|^2 + C_2^2-1}\le K_2|y_0|$ with
  $K_2>K_1>0$ independent of $q\ge1$, $y_0\in\R^n$;
  we have then proved that, on the support of $1-\chi(q,y_0)$, 
  $D_{q,y_0}\subset\overline{B_{r_2}(0)\setminus B_{r_1}(0)}$,
  hence 
   \begin{align*}
   I_2 &\le (1-\chi(q,y_0))\, |Q|^{-1}\, |B_1(0)| 
   \int_{r_1}^{r_2}\frac{r^{n-1}}{(1+r^2)^{\frac{n}{2}}}\,dr
      \\
   &\le |B_1(0)| \int_{r_1}^{r_2}\frac{dr}{r}\le |B_1(0)| 
   \log\frac{K_2}{K_1}=B_2,
                        \end{align*}                    
   with $B_2>0$ independent of $q\ge1$, $y_0\in\R^n$.
                \end{itemize}
  The proof is complete.
\end{proof}

\begin{rem}\label{REM:large-Q}
Let operator $\cF$ be as in Theorem \ref{thm:main}
with $\mu$ satisfying \eqref{EQ:m-mu} but with
any $m\leq 0$. Let $R>0$. 
Let $a_Q$ be an atom in $H^1(\R^n)$, 
        supported in a cube $Q\subset\R^n$ centred 
        at $y_0\in\R^n$ and with sidelength $q$ such that
 $R\ge q\ge 1$. Then, estimate 
        \eqref{eq:estim} holds with a constant $C$ independent of
        such $a_Q$.
\end{rem}
This remark follows immediately 
from the proof of Proposition \ref{PROP:large-Q}
if we observe that 
the boundedness of $I_1$ is actually independent of
the order of $b$ in $x$, while the boundedness of
$I_2$ is a consequence of the fact that 
the volume of $D_{q,y_0}$ is bounded by a uniform constant
for all cubes $Q$ in Remark \ref{REM:large-Q}.

Of course, the argument in the proof of Proposition
\ref{PROP:large-Q}
still holds if the hypothesis 
$|Q|\ge1$ is replaced  by $|Q|\ge Q_0>0$, or, 
 equivalently, by 
$q\ge q_0 > 0$. In the next steps of the 
proof we can then assume that $a_Q$ is supported in a 
cube $Q$ with sidelength $q=2^{-j}$, $j\ge j_0$, 
where $j_0$ is chosen so large that $\dfrac{q}{2}\sqrt{n}<1$. 
In this way, $y\in Q\Rightarrow |y-y_0|\le \dfrac{q}{2}\sqrt{n}
\Rightarrow\norm{y}\sim\norm{y_0}$, $y_0$ centre of $Q$, 
so that we also have, on $\supp b$, that $\norm{x}\sim\norm{y_0}$.

We now define an ``exceptional set'' set $\cN_Q$, which covers 
\begin{equation}
        \label{eq:S}
        \S=\{x=\nabla_\xi\varphi(y,\xi)
        \mbox{ for some $y\in Q$, $\xi\in\R^n$}\},
\end{equation}
and use again $L^2$-boundedness results, together with 
H\"older and Hardy-Littlewood-Sobolev inequalities, to estimate
$\|\cF a_Q\|_{L^1}$ on that set.

\noindent
Choose unit vectors $\xi_k^\nu$, $\nu=1,\dots,N(k,y)$, $
k\ge j_0$, $y\in\R^n$, such that:
\begin{itemize}
        \item[-] $|\xi_k^\nu-\xi_k^{\nu^\prime}|\ge 
        C_02^{-\frac{k}{2}}\norm{y}^{-\frac{1}{2}}$, $\nu\not=\nu^\prime$,
                    for some fixed positive constant $C_0<1$;
        \item[-] the unit sphere ${\mathbb S}^{n-1}$ 
        is covered by the balls centred at $\xi_k^\nu$ 
        with radius 
                    $2^{-\frac{k}{2}}\norm{y}^{-\frac{1}{2}}$.
\end{itemize}
We have then $N(k,y)\approx 2^{k\frac{n-1}{2}}
\norm{y}^{\frac{n-1}{2}}$. For $y\in Q$ and a constant 
$M$ to be fixed later, define 
\begin{equation}
                                \label{eq:Ryknu}
                \cR^y_{k\nu}=   \left\{
            x\colon\! |\langle x - \nabla_\xi
            \varphi(y,\xi^\nu_k),\xi^\nu_k\rangle| \le M 2^{-k}
                                                \mbox{ and }
          |\Pi^\perp_{k\nu}(x - 
          \nabla_\xi\varphi(y,\xi^\nu_k))| \le M 2^{-\frac{k}{2}}\norm{y}^{\frac{1}{2}}
                                        \right\},
\end{equation}
where $\Pi^\perp_{k\nu}$ is the projection onto the plane 
orthogonal to $\xi^\nu_k$. Set $\cR^y_{k\nu}$ is then a $n$-rectangle
with $n-1$ sides of length $M 2^{-\frac{k}{2}} 
\norm{y}^{\frac{1}{2}}$ and one side of length $M 2^{-k}$. 
If $Q$ has sidelength $q=2^{-j}$,
$j\ge j_0$, we define
\begin{equation}
        \label{eq:NQ}
        \cN_Q = \bigcup_{y\in Q} \bigcup_{\nu=1}^{N(j,y)} \cR^y_{j\nu}.
\end{equation}
Since $|\cR^y_{j\nu}|\approx 2^{-j\frac{n+1}{2}}
\norm{y_0}^{\frac{n-1}{2}}$ for $y\in Q$, it follows that
\begin{equation}
                \label{eq:sizeNQ}
        |\cN_Q| 
   \le C 2^{j\frac{n-1}{2}}\norm{y_0}^{\frac{n-1}{2}} 
   2^{-j\frac{n+1}{2}} \norm{y_0}^{\frac{n-1}{2}}
                = C 2^{-j} \norm{y_0}^{n-1} = C  
                |Q|^\frac{1}{n} \norm{y_0}^{n-1},
\end{equation}
for some constant $C\ge 0$ independent of $j\ge j_0$, $y_0\in\R^n$.
\begin{lemma}
        \label{lemma:SNQ}
        If in \eqref{eq:Ryknu} we take 
        $\displaystyle M=\sup_{\begin{subarray}{c} 
        |\alpha|=2,3\\(y,\xi)\in\R^n\times\R^n\end{subarray}}
        \norm{y}^{-1}\norm{\xi}^{-1+|\alpha|}
        |\partial^\alpha_{\xi}\varphi(y,\xi)|$,
        the singular set $\Sigma$ defined in 
        \eqref{eq:S} is a subset of $\cN_Q$.
\end{lemma}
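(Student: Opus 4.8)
The plan is to show that for any point $x\in\Sigma$, i.e. $x=\nabla_\xi\varphi(y,\xi)$ for some $y\in Q$ and some $\xi\in\R^n\setminus 0$, one can find an index $\nu$ such that $x\in\cR^y_{j\nu}$. First I would normalize: since $\nabla_\xi\varphi$ is homogeneous of order $0$ in $\xi$, we may assume $|\xi|=1$, so $\xi$ lies on $\mathbb{S}^{n-1}$. By the covering property of the vectors $\xi^\nu_j$, there is a $\nu\in\{1,\dots,N(j,y)\}$ with $|\xi-\xi^\nu_j|\le 2^{-j/2}\norm{y}^{-1/2}$. I then need to estimate $x-\nabla_\xi\varphi(y,\xi^\nu_j)=\nabla_\xi\varphi(y,\xi)-\nabla_\xi\varphi(y,\xi^\nu_j)$ both in the direction of $\xi^\nu_j$ and in the orthogonal directions.

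For the orthogonal (tangential) component, I would use the first-order Taylor expansion of $\nabla_\xi\varphi(y,\cdot)$ around $\xi^\nu_j$: the difference is controlled by $\sup|\partial^2_\xi\varphi|\cdot|\xi-\xi^\nu_j|$, and by the defining formula for $M$ (with $|\alpha|=2$) together with $|\xi|,|\xi^\nu_j|\sim 1$ and $\norm{y}\sim\norm{y}$, this is $\prec M\norm{y}\cdot 2^{-j/2}\norm{y}^{-1/2}=M 2^{-j/2}\norm{y}^{1/2}$, which is exactly the size of the $n-1$ long sides of $\cR^y_{j\nu}$ (after absorbing the harmless dimensional constant $C_0<1$ and fixing $M$ as the supremum). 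For the radial component $|\langle x-\nabla_\xi\varphi(y,\xi^\nu_j),\xi^\nu_j\rangle|$, the key observation is that the first-order term vanishes: by Euler's identity, homogeneity of degree one of $\varphi$ in $\xi$ gives $\sum_j \xi_j\,\partial_{\xi_k}\partial_{\xi_j}\varphi(y,\xi)=0$, so $\partial^2_\xi\varphi(y,\xi^\nu_j)$ annihilates $\xi^\nu_j$; hence $\langle \nabla_\xi\varphi(y,\xi)-\nabla_\xi\varphi(y,\xi^\nu_j),\xi^\nu_j\rangle$ is, to first order, zero, and the remainder is a second-order Taylor term bounded by $\sup|\partial^3_\xi\varphi|\cdot|\xi-\xi^\nu_j|^2\prec M\norm{y}\cdot 2^{-j}\norm{y}^{-1}=M 2^{-j}$, matching the short side of $\cR^y_{j\nu}$.

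The main obstacle — or rather the point requiring care — is making the cancellation of the linear term in the radial direction precise: one must write $\nabla_\xi\varphi(y,\xi)=\nabla_\xi\varphi(y,\xi^\nu_j)+\int_0^1\partial^2_\xi\varphi(y,\xi^\nu_j+t(\xi-\xi^\nu_j))\,(\xi-\xi^\nu_j)\,dt$ and then pair with $\xi^\nu_j$, using that $\langle\partial^2_\xi\varphi(y,\eta)\,\zeta,\eta\rangle$ need not vanish for general $\eta$, so one reorganizes so the homogeneity identity applies at the correct argument, or equivalently expands to second order with the first-order term evaluated exactly at $\xi^\nu_j$ where Euler's identity kills it. Once the two component estimates are in hand, choosing $M$ as in the statement ensures $x\in\cR^y_{j\nu}\subset\cN_Q$, and since $x\in\Sigma$ was arbitrary, $\Sigma\subset\cN_Q$, completing the proof. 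I would also note in passing that $\norm{y}\sim\norm{y_0}$ for $y\in Q$ (already established) is what makes the sizes in \eqref{eq:Ryknu} and \eqref{eq:NQ} compatible, though this is not strictly needed for the set inclusion itself.
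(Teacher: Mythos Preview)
Your proposal is correct and follows essentially the same route as the paper's proof: normalize $\xi$ to the sphere by $0$-homogeneity of $\nabla_\xi\varphi$, pick $\nu$ via the covering property, bound the tangential component by a first-order Taylor estimate using the $|\alpha|=2$ bound in the definition of $M$, and kill the linear term in the radial component via Euler's identity $\varphi''_{\xi\xi}(y,\xi^\nu_j)\xi^\nu_j=0$ so that the $|\alpha|=3$ bound controls the quadratic remainder. The paper merely packages the radial argument by introducing the auxiliary function $h^\nu_j(y,\xi)=\langle\nabla_\xi\varphi(y,\xi),\xi^\nu_j\rangle-\varphi(y,\xi^\nu_j)$ and checking $h^\nu_j(y,\xi^\nu_j)=\nabla_\xi h^\nu_j(y,\xi^\nu_j)=0$, which is exactly your cancellation observation in different notation.
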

\begin{proof}
        Let us denote $\vers(\xi)=\frac{\xi}{|\xi|}$.
        Since, for all $\xi\in\R^n$, 
        $|\vers(\xi) - \xi_j^\nu|\le 2^{-\frac{j}{2}}
        \norm{y}^{-\frac{1}{2}}$ for some 
        $\nu=1, \dots, N(j,y)$, then,
        with $M$ chosen as above, 
        we have $\nabla_\xi\varphi(y,\xi)\in\cR^y_{j\nu}$. 
        Indeed, $\nabla_\xi\varphi(y,\xi)$ is
        homogeneous of order $0$ in $\xi$ and 
        $\Pi^\perp_{j\nu}$ is a projection, so that
        \begin{align*}
                |\Pi^\perp_{j\nu}(\nabla_\xi\varphi(y,\xi) - \nabla_\xi\varphi(y,\xi^\nu_j))|
                &\le |\nabla_\xi\varphi(y,\vers(\xi)) - \nabla_\xi\varphi(y,\xi^\nu_j)|
                \\
                &\le M \norm{y} |\vers(\xi)-\xi^\nu_j| \le M 2^{-\frac{j}{2}}\norm{y}^\frac{1}{2}.
        \end{align*}
        Moreover, again in view of the 
        homogeneity of the phase function, if we set
        $h^\nu_j(y,\xi)=\langle\nabla_\xi
        \varphi(y,\xi),\xi_j^\nu\rangle-
        \langle\nabla_\xi\varphi(y,\xi_j^\nu),\xi_j^\nu\rangle
        =\langle\nabla_\xi\varphi(y,\xi),
        \xi^\nu_j\rangle-\varphi(y,\xi_j^\nu)$, we have 
        $h^\nu_j(y,\xi^\nu_j) = 0$ and
        $\nabla_\xi h^\nu_j(y,\xi) = 
                \langle \varphi^{\prime\prime}_{\xi\xi}(y,\xi), 
                \xi^\nu_j \rangle$. Therefore, we get
        $ \nabla_\xi h^\nu_j(y,\xi^\nu_j)=0$
        by Euler's formula. Writing the Taylor expansion
        of $h^\nu_j(y,\xi)$
        with respect to $\xi$ at $\xi_j^\nu$, we obtain
        \begin{equation*}
                |h^\nu_j(y,\xi)|\le M\norm{y}
                |\vers(\xi)-\xi^\nu_j|^2\le M 2^{-j},
        \end{equation*}
        as desired.
\end{proof}

\begin{prop}\label{PROP:NQ}
   $\displaystyle\|\cF a_Q\|_{L^1(\cN_Q)}\le C$ 
   with $C$ independent of $a_Q$.
\end{prop}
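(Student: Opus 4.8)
The plan is to estimate $\|\cF a_Q\|_{L^1(\cN_Q)}$ by H\"older's inequality, exploiting the fact that $|\cN_Q|$ is small (by \eqref{eq:sizeNQ}) together with an $L^2$-bound, but the naive $L^2\to L^2$ bound on $\cF$ loses too much, so I would instead use an $L^2\to L^q$ estimate for a suitable $q$ and then interpolate via Hardy--Littlewood--Sobolev. Concretely, first I would write $\|\cF a_Q\|_{L^1(\cN_Q)}\le|\cN_Q|^{1-1/q}\|\cF a_Q\|_{L^q(\Rn)}$ for some $q>2$ to be chosen, reducing the task to a global $L^2(\Rn)\to L^q(\Rn)$ bound for $\cF$ (or rather for $M_{n/2}\cF$ composed with an appropriate power of $\langle x\rangle$) and an $L^2$-estimate on the atom, $\|a_Q\|_{L^2}\le|Q|^{-1/2}$.

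The key step is to produce the $L^2\to L^q$ mapping property with the right power of $\langle y_0\rangle$ and $|Q|$. Here I would invoke the Sobolev embedding $W^{s,2}(\Rn)\hookrightarrow L^q(\Rn)$ with $s=n(1/2-1/q)$, combined with the global composition formulae of \cite{RS06b,RS07}: since $M_{n/2}\cF$ is a Fourier integral operator with amplitude bounded in $x,y$ and of order $-(n-1)/2$ in $\xi$, composing with $(1-\Delta)^{s/2}$ produces an FIO of order $s-(n-1)/2$ in $\xi$, which is $L^2$-bounded by \cite{AF78,RS06a} as soon as $s\le(n-1)/2$, i.e. $q\le 2n/(n-1)$. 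Taking $q$ at (or just below) this endpoint, the Hardy--Littlewood--Sobolev inequality converts the gain in $\xi$-order into the $L^q$ gain. Assembling the pieces: $\|\cF a_Q\|_{L^1(\cN_Q)}\lesssim |\cN_Q|^{1/2+1/(2n)}\,\langle y_0\rangle^{-n/2}\,\|a_Q\|_{L^2}$, and then substituting \eqref{eq:sizeNQ} together with $|\cN_Q|^{1/n}\sim|Q|^{1/n^2}\langle y_0\rangle^{(n-1)/n}\cdots$ and $\|a_Q\|_{L^2}\le|Q|^{-1/2}$, the powers of $|Q|$ and $\langle y_0\rangle$ should cancel, leaving a constant independent of $a_Q$. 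One must also insert the weight $\langle x\rangle^{-n/2}\sim\langle y_0\rangle^{-n/2}$ coming from $\|\cF a_Q\|_{L^1}=\int|\langle x\rangle^{-n/2}(M_{n/2}\cF a_Q)(x)|\,dx$, using that $\langle x\rangle\sim\langle y_0\rangle$ on $\supp b$ after the reduction \eqref{EQ:reduce-amp} and the smallness of $q$.

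The main obstacle I expect is bookkeeping of the powers of $\langle y_0\rangle$: the anisotropic dyadic boxes $\cR^y_{j\nu}$ carry the factor $\langle y\rangle^{(n-1)/2}$ in their volume, and one has to check that the $\langle y_0\rangle$-growth of $|\cN_Q|$ is exactly compensated by the decay $\langle x\rangle^{-n/2}$ from the weight and by whatever $\langle y_0\rangle$-behaviour the $L^2\to L^q$ estimate contributes. This requires keeping the $y$-dependence explicit throughout the composition-formula step rather than treating the amplitude as merely ``bounded''. A secondary technical point is ensuring the $L^2$-boundedness results of \cite{AF78,RS06a} apply after composition with $(1-\Delta)^{s/2}$ under each of hypotheses (I), (II), (III); this is routine given the stability of conditions \eqref{eq:phf}, \eqref{EQ:phf-add}, \eqref{eq:thm-phi1}, \eqref{eq:thm-phi2} under such compositions, but should be stated. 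Once the exponents are matched the estimate follows, completing the proof. \fine
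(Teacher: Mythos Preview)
Your approach has a genuine gap in the $|Q|$-power counting, not only in the $\jp{y_0}$-powers. After your H\"older step and the $L^2\to L^q$ Sobolev bound for $M_{n/2}\cF$, you arrive at
\[
\|\cF a_Q\|_{L^1(\cN_Q)}\prec \jp{y_0}^{-n/2}\,|\cN_Q|^{1-1/q}\,\|a_Q\|_{L^2},
\]
and you then use $\|a_Q\|_{L^2}\le|Q|^{-1/2}$. (Incidentally, $s=n(\tfrac12-\tfrac1q)\le\tfrac{n-1}{2}$ gives $q\le 2n$, not $q\le\tfrac{2n}{n-1}$.) Substituting \eqref{eq:sizeNQ}, the resulting exponent of $|Q|$ is $\tfrac{1}{n}(1-\tfrac1q)-\tfrac12$, and for this to be nonnegative one would need $1-\tfrac1q\ge\tfrac{n}{2}$, which is impossible for any finite $q$ when $n\ge2$. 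Hence the bound blows up as $|Q|\to 0$ for every admissible $q$; the cancellation you anticipate simply does not occur.

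The paper instead places the smoothing on the \emph{atom} side: writing
\[
\cF a_Q = M_{-n/2}\bigl[M_{n/2}\cF(1-\Delta)^{(n-1)/4}\bigr]\bigl[(1-\Delta)^{-(n-1)/4}a_Q\bigr],
\]
one applies H\"older with exponents $(2,2)$, uses the $L^2(\Rn)$-boundedness of the bracketed operator (via \cite{AF78} or \cite{RS06a}), and then Hardy--Littlewood--Sobolev on the atom,
\[
\|(1-\Delta)^{-(n-1)/4}a_Q\|_{L^2}\prec\|a_Q\|_{L^{p_n}}\le|Q|^{-1/(2n)},\qquad p_n=\tfrac{2n}{2n-1}.
\]
Now $\bigl(\jp{y_0}^{-n}|\cN_Q|\bigr)^{1/2}\prec \jp{y_0}^{-1/2}|Q|^{1/(2n)}$, the factors $|Q|^{\pm 1/(2n)}$ cancel exactly, and the leftover $\jp{y_0}^{-1/2}$ is harmless. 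The decisive point you missed is that $\|a_Q\|_{L^{p_n}}\le|Q|^{-1/(2n)}$ is far less wasteful than $\|a_Q\|_{L^2}\le|Q|^{-1/2}$; your instinct to invoke Sobolev/HLS is correct, but it must act on the input $a_Q$, not on the image of $\cF$.
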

\begin{proof}
 First we observe that operator 
 $M_\frac{n}{2}\cF(1-\Delta)^\frac{n-1}{4}$ is a
 Fourier integral operator with the same phase and same
 properties of the amplitude as those of $\cF$ in view
 of the global calculus in \cite{RS07}.
 Consequently, operator
 $M_\frac{n}{2}\cF(1-\Delta)^\frac{n-1}{4}$
 is bounded on $L^2(\Rn)$ in view of
 the $L^2$-boundedness theorems in \cite{AF78} under
 assumption (I) and in \cite{RS06a} under assumptions 
 (II) and (III).
                 Writing $p_n=\dfrac{2n}{2n-1}$ and
                recalling \eqref{eq:sizeNQ}, 
                we have
               \begin{align*}
       \|\cF a_Q\|_{L^1(\cN_Q)} &= \left\|M_{-\frac{n}{2}} 
       \left[M_\frac{n}{2}\cF(1-\Delta)^\frac{n-1}{4}\right]
       (1-\Delta)^{-\frac{n-1}{4}} 
          a_Q\right\|_{L^1(\cN_Q)}   \\
            &\le \left(\int\limits_{\begin{subarray}{c}
            \cN_Q\\\norm{x}\sim\norm{y}\end{subarray}} 
            \norm{x}^{-n}\,dx\right)^\frac{1}{2}
             \left\|\left[M_\frac{n}{2}\cF
             (1-\Delta)^\frac{n-1}{4}\right] 
      \left[(1-\Delta)^{-\frac{n-1}{4}} a_Q\right]
      \right\|_{L^2(\R^n)}
                                                \\
        &\le C_1\left(\norm{y_0}^{-n}\,|Q|^\frac{1}{n}\,
        \norm{y_0}^{n-1}\right)^\frac{1}{2}
        \left\|(1-\Delta)^{-\frac{n-1}{4}} a_Q
        \right\|_{L^2(\R^n)}
                                                \\
        &\le C_2\,|Q|^\frac{1}{2n}\left\|a_Q
        \right\|_{L^{p_n}(\R^n)}
        \le C\, |Q|^\frac{1}{2n}\,|Q|^{-\frac{1}{2n}} = C,
                \end{align*}
    with a constant $C$ independent of $a_Q$, 
    in view of the Hardy-Littlewood-Sobolev inequality
                \begin{equation*}
                                \left\|(1-\Delta)^{-\frac{n-1}{4}} a_Q\right\|_{L^2(\R^n)}\le\widetilde{C}\left\|a_Q\right\|_{L^{p_n}(\R^n)},
                \end{equation*}
                and since, obviously, $\|a_Q\|_{L^{p_n}(\R^n)}\le |Q|^{\frac{1}{p_n}-1}=|Q|^{-\frac{1}{2n}}$.
\end{proof}

\noindent
We will now prove the estimate
\begin{equation}
        \label{eq:offNQ}
        \|\cF a_Q\|_{L^1(\R^n\setminus\cN_Q)}\le C
\end{equation}
off the exceptional set. We first introduce a 
dyadic decomposition, choosing 
function $\theta\in\cC^\infty(\R)$ such that
$\supp\theta\subset\left(\dfrac{1}{4},4\right)$ and 
such that for all $s>0$ 
we have $\sum_{k\in\Z}\theta(2^{-k}s)=1$. We now set
\begin{equation}
        \label{eq:defFk}
        F_k(x,y)=\int_\Rn e^{i[\langle x,\xi\rangle-
        \varphi(y,\xi)]} b(x,y,\xi)\,\theta_k(\xi)\,d\xi,
\end{equation}
where $\theta_k(\xi)=\theta(2^{-k}|\xi|)$. 
We can assume without loss of generality 
that $b(x,y,\xi)=0$ for $|\xi|<8$. 
Defining
$\displaystyle\theta_0=1-\sum_{k>0}\theta_k$, 
we have $F=\displaystyle\sum_{k\ge1}F_k$. 
Estimate \eqref{eq:offNQ} is then a consequence
of the following proposition, where we recall that
$j$ was introduced in a way that $2^{-j}$ is a sidelength
of $Q$.
\begin{prop}\label{PROP:NQc}
        For all $y,y^\prime\in Q$, $j,k\in\N$, $j\ge j_0$,
        we have
        \begin{align}
                                \label{eq:estFk-a}
                \int_{\R^n\setminus\cN_Q}|F_k(x,y)|\,dx 
                \prec 2^{j-k}&\mbox{ if $k>j$},
                \\
                                \label{eq:estFk-b}
                \int_{\R^n}|F_k(x,y)-F_k(x,y^\prime)|\,dx 
                \prec 2^{k-j}&\mbox{ if $k\le j$}.
        \end{align}
\end{prop}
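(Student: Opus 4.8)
The strategy is the classical Seeger--Sogge--Stein non-stationary-phase estimate, adapted to the present global setting with the $y$-dependent dyadic decomposition in frequency. For a fixed $y\in Q$ (recall $Q$ has sidelength $2^{-j}$, $j\ge j_0$, so that $\jp{y}\sim\jp{y_0}$ on $\supp b$ and also $\jp{x}\sim\jp{y_0}$ there), I would further decompose $\theta_k$ angularly, writing $\theta_k(\xi)=\sum_{\nu=1}^{N(k,y)}\theta_k(\xi)\,\chi^\nu_k(\xi/|\xi|)$, where the $\chi^\nu_k$ are a partition of unity subordinate to the caps of radius $2^{-k/2}\jp{y}^{-1/2}$ around the directions $\xi^\nu_k$ from the construction preceding Lemma \ref{lemma:SNQ}. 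This yields $F_k=\sum_\nu F_k^\nu$, where $F_k^\nu$ has $\xi$-support in a box of dimensions $\sim 2^k$ (radial) $\times\,(2^{k/2}\jp{y}^{1/2})^{n-1}$ (tangential), hence of measure $\sim 2^{k(n+1)/2}\jp{y}^{(n-1)/2}$.

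For \eqref{eq:estFk-a} (the case $k>j$): on the complement of $\cN_Q$, for $x\notin\cR^y_{k\nu}$ (note that the rectangles \eqref{eq:Ryknu} used to build $\cN_Q$ are defined with the sidelength parameter $j$, not $k$, so one must first observe that for $k>j$ the $k$-rectangle is contained in the $j$-rectangle up to the choice of $M$, or equivalently re-run the argument with the $k$-caps) one has either $|\jp{x-\nabla_\xi\varphi(y,\xi^\nu_k),\xi^\nu_k}|\gtrsim 2^{-k}$ or $|\Pi^\perp_{k\nu}(x-\nabla_\xi\varphi(y,\xi^\nu_k))|\gtrsim 2^{-k/2}\jp{y}^{1/2}$. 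Using Lemma \ref{lemma:SNQ}'s Taylor-expansion bookkeeping, the gradient in $\xi$ of the phase $\langle x,\xi\rangle-\varphi(y,\xi)$ restricted to the $\nu$-box is comparable to $x-\nabla_\xi\varphi(y,\xi^\nu_k)$ modulo lower-order terms controlled by the second derivatives \eqref{EQ:phf-add}; so I would introduce the first-order operator $L$ differentiating once in the radial direction and once (with the rescaled vector field $2^{k/2}\jp{y}^{1/2}\,$times a tangential derivative) in each tangential direction, and integrate by parts $N$ times. Each integration by parts in the radial direction gains a factor $(2^k|\langle\cdots\rangle|)^{-1}$ and costs, via \eqref{eq:amplf} and \eqref{EQ:phf-add} together with $\jp{\xi}\sim 2^k$, at worst a bounded factor; each tangential integration by parts gains $(2^{k/2}\jp{y}^{1/2}|\Pi^\perp\cdots|)^{-1}$. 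Pointwise this gives
\[
 |F_k^\nu(x,y)|\prec 2^{k(n+1)/2}\jp{y}^{(n-1)/2}\,2^{\mu k}\,
 \bigl(1+2^k\delta_1\bigr)^{-N}\bigl(1+2^{k/2}\jp{y}^{1/2}\delta_2\bigr)^{-N},
\]
where $\delta_1,\delta_2$ are the radial/tangential distances of $x$ to the center of $\cR^y_{k\nu}$; integrating in $x$ over $\R^n\setminus\cR^y_{k\nu}$ gives $\prec 2^{\mu k}$, and summing over the $N(k,y)\approx 2^{k(n-1)/2}\jp{y}^{(n-1)/2}$ values of $\nu$ produces $\prec 2^{k(n-1)/2}\jp{y}^{(n-1)/2}2^{\mu k}$. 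With $\mu\le-(n-1)/2$ this is $\prec\jp{y}^{(n-1)/2}$; the extra decay $2^{j-k}$ then comes from exploiting the $H^1$ normalization only indirectly here — rather, $2^{j-k}$ must come from the geometry: for $x\notin\cN_Q$ one always has $\delta_2\gtrsim 2^{-j/2}\jp{y}^{1/2}$ (the caps at level $j$ are coarser), so one of the tangential factors is $\gtrsim(2^{(k-j)/2})$, and choosing $N$ large converts the surplus $2^{k(n-1)/2}$ into $2^{j(n-1)/2}$ times $2^{-(k-j)}$ (plus, when needed, absorbing $\jp{y}\sim 2^{\text{something}}$ against $\mu<-(n-1)/2$ strictly, or handling $\mu=-(n-1)/2$ by one extra clean gain). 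I would present this carefully since the bookkeeping of the $y$-weights against the angular sum is exactly the novel point.

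For \eqref{eq:estFk-b} (the case $k\le j$): here one uses the moment condition only in the next proposition; for the kernel difference itself, write $F_k(x,y)-F_k(x,y')=\int_0^1\frac{d}{dt}F_k(x,y'+t(y-y'))\,dt$, so it suffices to bound $|y-y'|\,|\nabla_y F_k(x,\cdot)|$ integrated in $x$. Since $|y-y'|\le 2^{-j}\sqrt n$, and $\partial_{y_\ell}$ falling on the phase brings down $\partial_{y_\ell}\varphi(y,\xi)$ which is $\prec\jp{\xi}\sim 2^k$ (by \eqref{eq:phf}), while $\partial_{y_\ell}$ on the amplitude is harmless by \eqref{eq:amplf}/\eqref{eq:amplf-decay1}/\eqref{eq:amplf-decay2}, one gets an extra factor $\prec 2^{-j}2^k=2^{k-j}$ relative to the $L^1_x$-bound of $F_k$ itself. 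Thus \eqref{eq:estFk-b} reduces to showing $\int_{\R^n}|F_k(x,y)|\,dx\prec 1$ uniformly in $k$, which is again the angular decomposition argument above but now integrating over all of $\R^n$ (no exceptional set removed): the pointwise bound integrates to $\prec 2^{\mu k}2^{k(n-1)/2}\jp{y}^{(n-1)/2}$... so to close it one in fact needs the extra decay to come from elsewhere — here it is crucial that one exploits $\jp{x}\sim\jp{y}$ on $\supp b$ together with $m\le-n/2$ via the $M_{n/2}\mathcal F$ trick as in Propositions \ref{PROP:large-Q} and \ref{PROP:NQ}, i.e.\ one really proves an $L^1_x$ bound weighted by $\jp{x}^{-n/2}$ and recovers the $\jp{y}$-powers that way; I would organize \eqref{eq:estFk-b} so that the $2^{k-j}$ gain and the $\jp{y_0}$-summability are separated.

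\textbf{Main obstacle.} The delicate point is not the integration by parts per se but the simultaneous bookkeeping of three competing exponents — the angular multiplicity $2^{k(n-1)/2}\jp{y}^{(n-1)/2}$, the amplitude order $2^{\mu k}$, and the spatial/weight factor $\jp{y}^{(n-1)/2}$ coming from $|\cR^y_{k\nu}|$ and the $M_{n/2}$-conjugation — and verifying that on $\R^n\setminus\cN_Q$ the mismatch between the level-$j$ rectangles defining $\cN_Q$ and the level-$k$ boxes of $F_k$ genuinely yields the stated $2^{\pm(k-j)}$ gain in each regime. One must also confirm that all derivatives falling on $\chi$ (which is only $S^{0,0,0}$ on $\supp b$, cf.\ the Proposition) and on the cutoffs $\chi^\nu_k$ are controlled uniformly; the homogeneity of $\varphi$ in $\xi$ and \eqref{eq:phf}, \eqref{EQ:phf-add} are exactly what make the rescaled tangential vector fields admissible, so the argument must be run with those estimates made explicit at each step.
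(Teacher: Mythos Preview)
Your overall architecture is the same as the paper's --- angular decomposition of $F_k$ into pieces $F_k^\nu$, integration by parts with rescaled radial/tangential operators, and the mean-value reduction for \eqref{eq:estFk-b}. But your $\jp{y}$-bookkeeping contains two concrete errors that prevent the argument from closing, and your proposed fix (importing the $M_{n/2}\cF$ trick from Propositions~\ref{PROP:large-Q} and~\ref{PROP:NQ}) is unnecessary.

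First, the tangential extent of the $\xi$-box is $2^k\cdot 2^{-k/2}\jp{y}^{-1/2}=2^{k/2}\jp{y}^{-1/2}$, not $2^{k/2}\jp{y}^{1/2}$; so the $\xi$-volume of $\supp\widetilde b_k^\nu$ is $\prec 2^{k(n+1)/2}\jp{y}^{-(n-1)/2}$, with a \emph{negative} power of $\jp{y}$. Second, and more importantly, you use only $2^{\mu k}$ for the amplitude size, forgetting that after the reduction \eqref{EQ:reduce-amp} one has $\jp{x}\sim\jp{y}$ on $\supp b$, so $\jp{x}^{m_1}\jp{y}^{m_2}\jp{\xi}^\mu\sim \jp{y}^{m}2^{\mu k}\le \jp{y}^{-n/2}2^{-k(n-1)/2}$. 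This $\jp{y}^{-n/2}$ factor is exactly what the paper uses: combined with the correct $\xi$-volume and the integral $\int H_{k,N}^\nu\,dx\prec 2^{-k(n+1)/2}\jp{y}^{(n-1)/2}2^{-N'(k-j)}$, one gets $\int|F_k^\nu|\,dx\prec 2^{j-k}2^{-k(n-1)/2}\jp{y}^{-n/2}$, and the angular sum over $N(k,y)\approx 2^{k(n-1)/2}\jp{y}^{(n-1)/2}$ leaves $2^{j-k}\jp{y}^{-1/2}\prec 2^{j-k}$. No external $L^2$ or weighted-multiplication trick is needed. The same correction fixes \eqref{eq:estFk-b}: with the $\jp{y}^{-n/2}$ factor in place, $\int_{\Rn}|F_k^\nu|\,dx\prec 2^{-k(n-1)/2}\jp{y_0}^{-n/2}$ (no exceptional set, so no $2^{j-k}$ gain), and the mean-value step gives the extra $2^{k-j}$; summing over $\nu$ then yields $2^{k-j}\jp{y_0}^{-1/2}\prec 2^{k-j}$.

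Your discussion of the $2^{j-k}$ gain on $\Rn\setminus\cN_Q$ is roughly right but slightly garbled: the gain does not come from ``$\delta_2\gtrsim 2^{-j/2}\jp{y}^{1/2}$ always'', but from the dichotomy that outside $\cN_Q$ either the radial distance exceeds $M2^{-j}$ or the tangential distance exceeds $M2^{-j/2}\jp{y}^{1/2}$; scaling by $2^k$ (resp.\ $2^{k/2}\jp{y}^{-1/2}$) this produces a factor $\succ 2^{k-j}$ (resp.\ $\succ 2^{(k-j)/2}$) in the denominator of $H_{k,N}^\nu$, which is what \eqref{eq:intHnuk} records.
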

\begin{proof}
        For each $k\in\N$, let $\{\chi^\nu_{k}\}$, 
        $\nu=1,\dots,N(y,k)$, be a homogeneous partition 
        of unity associated with the covering
        of the unit sphere with the balls 
        $B(\xi_k^\nu,c_02^{-\frac{k}{2}}\norm{y}^{-\frac{1}{2}})$, 
        as introduced above. Explicitly,
        we choose $\cC^\infty$ functions 
        $\chi^\nu_k=\chi^\nu_k(y,\xi)$, homogeneous 
        in $\xi$ of degree $0$, such that,
        for all $y\in\R^n$, we have
        \begin{itemize}
                \item[-] $\chi^\nu_k(y,\vers(\xi))\equiv 1$ 
                for $\vers(\xi)$ in a neighbourhood of 
                $\xi^\nu_k$ in ${\mathbb S}^{n-1}$;
                \item[-] $\chi^\nu_k(y,\xi)=0$ if 
                $|\vers(\xi)-\xi^\nu_k|\ge c_02^{-\frac{k}{2}}
                \norm{y}^{-\frac{1}{2}}$; 
                \item[-] $\sum_\nu\chi^\nu_k=1$;
                \item[-] $|\partial^\gamma\chi^\nu_k(y,\xi)|
                \prec |\xi|^{-|\gamma|}(2^k\norm{y})^
                {\frac{|\gamma|}{2}}$ for all 
                multi-indices $\gamma\in\Z_+^n$.
        \end{itemize}
        We now define
        \begin{equation*}
                F_k^\nu(x,y)=\int_\Rn 
                e^{i[\langle x,\xi\rangle-\varphi(y,\xi)]} 
                b^\nu_k(x,y,\xi)\,d\xi,
        \end{equation*}
        where $b^\nu_k(x,y,\xi)=b(x,y,\xi)\,\theta_k(\xi)\,
        \chi^\nu_k(y,\xi)$. Set also
        \begin{equation*}
                r^\nu_k(y,\xi)=\varphi(y,\xi)-\langle\nabla_\xi
                \varphi(y,\xi^\nu_k),\xi\rangle
                \Rightarrow \nabla_\xi r^\nu_k(y,\xi)=\nabla_\xi
                \varphi(y,\xi)-\nabla_\xi\varphi(y,\xi^\nu_k),
        \end{equation*}
        and $D^\nu_k=\langle\nabla_\xi,\xi^\nu_k\rangle$, 
        $\nu=1,\dots,N(k,y)$. Clearly, by definition of 
        $r^\nu_k$ and homogeneity of $\varphi$, we have
        $r^\nu_k(y,\xi^\nu_k)=0$ and
        $\nabla_\xi r^\nu_k(y,\xi^\nu_k)=0$. Since, 
        again by homogeneity,
        \begin{align*}
                (D^\nu_k r^\nu_k)(y,\xi)&= D^\nu_k\varphi(y,\xi)
                -\varphi(y,\xi^\nu_k)
                \Rightarrow (D^\nu_k r^\nu_k)(y,\xi^\nu_k)=0,
                                \\
            (\nabla_\xi D^\nu_k r^\nu_k)(y,\xi)&= D^\nu_k
            \nabla_\xi\varphi(y,\xi)
            \Rightarrow (\nabla_\xi D^\nu_k r^\nu_k)
            (y,\xi^\nu_k)=0,
        \end{align*}
        by induction we also see that, for all $N\in\N$, we have
                \begin{equation}
                        \label{eq:rnu-rad-a}
                        [(D^\nu_k)^N r^\nu_k](y,\xi^\nu_k)=0, 
                        \hspace{0.5cm} [\nabla_\xi(D^\nu_k)^N 
                        r^\nu_k](y,\xi^\nu_k)=0.
                \end{equation}
                Writing the Taylor expansion in $\xi$ of $r^\nu_k$ 
                centred in $\xi^\nu_k$, \eqref{eq:rnu-rad-a} 
                implies that, for all
                $N\in\N$, on $\supp(b^\nu_k)$ we have
                \begin{equation}
                        \label{eq:rnu-rad-b}
                        [(D^\nu_k)^N r^\nu_k](y,\xi)\prec 
                        |\xi|^{1-N}\norm{y}|\vers(\xi)-\xi^\nu_k|^2
                        \prec 2^{k(1-N)}2^{-k}=2^{-kN}.
                \end{equation}
                On the other hand, for the ``transversal 
                derivatives" with $|\gamma|\ge 1$ we have, 
                on $\supp(b^\nu_k)$,
                \begin{equation}
                        \label{eq:rnu-tr}
                        D^\gamma_\xi r^\nu_k(y,\xi)\prec 
                        |\xi|^{1-|\gamma|}\norm{y}\prec 
                        2^{-k(|\gamma|-1)}\norm{y}\prec
                        2^{-k\frac{|\gamma|}{2}}\norm{y}.
                \end{equation}
                Indeed, first we recall that
                on $\supp(b^\nu_k)$, $|\xi|$ is 
                equivalent to $2^{k}$.
                For $|\gamma|\ge 2$, we then have
                $|\xi|^{1-|\gamma|}
                \prec 2^{k(1-|\gamma|)}\le 2^{-k\frac{|\gamma|}{2}}$ 
                and hence also \eqref{eq:rnu-tr}. 
                For $|\gamma|=1$, the first derivatives 
                are actually bounded by $2^{-\frac{k}{2}}
                \norm{y}^{\frac{1}{2}}$,
                since by $\nabla_\xi r^\nu_k(y,\xi^\nu_k)=0$ 
                and Taylor expansion we have
                \begin{equation*}
                        (\partial_{\xi_j}r^\nu_k)(y,\xi)
                        \prec \norm{y}|\xi-\xi^\nu_k|\prec 
                        2^{-\frac{k}{2}}\norm{y}^\frac{1}{2}.
                \end{equation*}
      Consequently, one can readily check that on $\supp(b^\nu_k)$,
            we have estimate
                                \begin{equation}
                                        \label{eq:exprnu}
        D^\gamma_\xi e^{ir^\nu_k(y,\xi)}\prec 
        2^{-k\frac{|\gamma|}{2}}\norm{y}^\frac{|\gamma|}{2}.
                                \end{equation}
     Performing a rotation\footnote{Note that all 
     the symbol estimates for $\theta_k$,
     $\chi^\nu_k$, $r^\nu_k$, $\varphi$, and $b$ hold unchanged
     for fixed $y$, since all the entries of $C$ are bounded, 
     in view of $A\in O(n)$.} $\xi=C\widetilde{\xi}$,
     we can simplify notation and assume 
     $\xi^\nu_k=(1,0,\dots,0)$, $\Pi^\nu_k(\xi)=(0,\xi^\prime)$. 
     Rewriting $F^\nu_k(x,y)$ as
                                \begin{equation}
                                        \label{eq:Fnuk}
                                        F^\nu_k(x,y)=\int_\Rn 
                 e^{i\langle x-\nabla_\xi\varphi(y,\xi),
                 \xi^\nu_k \rangle}\widetilde{b}^\nu_k(x,y,\xi)\,
                 d\xi,
                                \end{equation}
                                where $\widetilde{b}^\nu_k(x,y,\xi)=e^{ir^\nu_k(y,\xi)}
                                b^\nu_k(x,y,\xi)$, we observe that the 
                                derivatives in the $\xi_1$ (``radial'') direction
                                of $\chi^\nu_k$ vanish identically, 
                                so that, defining the 
                                selfadjoint operator $L^\nu_k$ as
                                \begin{equation*}
                                        L^\nu_k=\left(I-2^{2k}
           \frac{\partial^2}{\partial \xi_1^2}\right)
           \left(I-2^{k}\norm{y}^{-1}\langle\nabla_{\xi^\prime},
           \nabla_{\xi^\prime}\rangle\right),
                                \end{equation*}
           \eqref{eq:rnu-rad-b}, \eqref{eq:exprnu}, 
           the properties of $\chi_k^\nu$, the definition 
           of $\theta_k$ and the hypoteses on $\varphi$ and $b$
                                imply, for all $N\in\N$, that we have
                                \begin{equation}
                                        \label{eq:estbnuk}
           [(L^\nu_k)^N\widetilde{b}^\nu_k](x,y,\xi)\prec 
           2^{-k\frac{n-1}{2}}\norm{y}^{-\frac{n}{2}}.
                                \end{equation}
           Repeated integrations by parts allow to write
                                \begin{equation*}
                                        F^\nu_k(x,y)=
                                        H^\nu_{k,N}(x,y)
                                     \int_\Rn e^{i\langle x-\nabla_\xi
\varphi(y,\xi^\nu_k),\xi \rangle}
[(L^\nu_k)^N\widetilde{b}^\nu_k](x,y,\xi)\,d\xi,
                                \end{equation*}
                                with
         \begin{equation*}
           H^\nu_{k,N}(x,y)=\left(1+|2^k(x-\nabla_\xi
           \varphi(y,\xi^\nu_k))_1|^2\right)^{-N}
           \left(1+|2^\frac{k}{2}\norm{y}^{-\frac{1}{2}}
           (x-\nabla_\xi\varphi(y,\xi^\nu_k))^\prime|^2\right)^{-N}.
         \end{equation*}
   Since $\textrm{vol}_\xi(\supp(\widetilde{b}^\nu_k))
   \prec 2^k \, (2^k\cdot2^{-\frac{k}{2}}
   \norm{y}^{-\frac{1}{2}})^{n-1}=2^{k\frac{n+1}{2}}
   \norm{y}^{-\frac{n-1}{2}}$, by
                                \eqref{eq:estbnuk} it follows that
                                \begin{equation}
                                        \label{eq:estFnuk}
|F^\nu_k(x,y)|\prec H^\nu_{k,N}(x,y) \, 2^{k} 
\norm{y}^{-n+\frac{1}{2}}.
                                \end{equation}
 In $\R^n\setminus\cN_Q$, we must have either 
 $|2^k(x-\nabla_\xi\varphi(y,\xi^\nu_k))_1|\succ 2^{k-j}$ or
 $|2^\frac{k}{2}\norm{y}^{-\frac{1}{2}}(x-\nabla_\xi
 \varphi(y,\xi^\nu_k))^\prime|\succ 2^{\frac{k-j}{2}}$. 
 Since, obviously, $H^\nu_{k,N}=H^\nu_{k,N-N^\prime}\cdot 
 H^\nu_{k,N^\prime}$ for any $N,N^\prime\in\N$ such
 that $N>N^\prime$, then, for any $k>j$, we can estimate 
  \begin{equation}
  \label{eq:intHnuk}
  \int_\Rn H^\nu_{k,N}(x,y)\,dx \le C_{N-N^\prime}\,2^{-k}\,
  2^{-k\frac{n-1}{2}}\norm{y}^{\frac{n-1}{2}}\,2^{-N^\prime(k-j)},
  \end{equation}
  which implies, together with \eqref{eq:estFnuk}, that 
     \begin{equation}
       \label{eq:estintFnuk}
       \int_\Rn |F^\nu_k(x,y)|\,dx\prec 2^{j-k} \,
       2^{-k\frac{n-1}{2}}\norm{y}^{-\frac{n}{2}}.
     \end{equation}
     
     Now \eqref{eq:estFk-a} follows from \eqref{eq:estintFnuk}, 
     by summing over $\nu=1,\dots,N(y,k)$.
      Owing to
      \begin{align*}
       \int_\Rn |F_k(x,y)&-F_k(x,y^\prime)|\,dx\le
       \sum_\nu\int_\Rn|F^\nu_k(x,y)-F^\nu_k(x,y^\prime)|\,dx
         \\
       &\le|y-y^\prime|\sum_\nu\int_\Rn\sup_{y\in Q}
       |\nabla_yF^\nu_k(x,y)|\,dx\prec 2^{-j}
       \sum_\nu\int_\Rn\sup_{y\in Q}|\nabla_yF^\nu_k(x,y)|\,dx,
      \end{align*}
     estimate \eqref{eq:estFk-b} would follow from
       \begin{equation}
         \label{eq:estdyFk}
         \int_\Rn\sup_{y\in Q}|\nabla_yF^\nu_k(x,y)|\,dx
         \prec 2^k\cdot 2^{-k\frac{n-1}{2}}
         \norm{y_0}^{-\frac{n}{2}}.
       \end{equation}
      Now, \eqref{eq:estdyFk} indeed holds true, since 
      $\nabla_yF^\nu_k(x,y)$ can be written in the 
      form \eqref{eq:Fnuk} with
      $\widetilde{a}^\nu_k(x,y,\xi)=
      \nabla_y\widetilde{b}^\nu_k(x,y,\xi)-
      i\widetilde{b}^\nu_k(x,y,\xi)\cdot\nabla_y
      \varphi(y,\xi)$ in place of $\widetilde{b}^\nu_k(x,y,\xi)$,
      and $\widetilde{a}^\nu_k(x,y,\xi)$ 
      has the same properties of
       $\widetilde{b}^\nu_k(x,y,\xi)$ with order in 
       $\xi$ increased by one unit. It is then possible to 
      repeat the same argument used in the proof of 
      \eqref{eq:estintFnuk}, and to sum over 
      $\nu=1,\dots,N(y,k)$, recalling that
      $\norm{y}\sim\norm{y_0}$ for $y\in Q$.
\end{proof}

\noindent
\textit{Conclusion of the proof of \eqref{eq:offNQ}}:
by properties (1), (2) and (3) of $a_Q$ and Proposition \ref{PROP:NQc}, denoting by $\cF_k$ the operator with kernel $F_k$ defined in \eqref{eq:defFk}, we have
\begin{align*}
        \| \cF a_Q \|_{L^1(\R^n\setminus\cN_Q)} &\le \sum_{k\ge 0}\| \cF_k a_Q \|_{L^1(\R^n\setminus\cN_Q)}
        \\
        &\le\sum_{0\le k\le j}\int_{\R^n}\left|\int_Q [F_k(x,y)-F_k(x,y^\prime)]\, a_Q(y)\,dy\right|dx
        \\
        &+\sum_{k> j}\int_{\R^n\setminus\cN_Q}\left|\int_Q F_k(x,y)\, a_Q(y)\,dy\right|dx
        \\
        &\le\sum_{0\le k\le j}\int_Q\left[\int_{\R^n} | F_k(x,y)-F_k(x,y^\prime)| \,dx\right] |a_Q(y)|\,dy
        \\
        &+\sum_{k> j}\int_Q\left[\int_{\R^n\setminus\cN_Q} | F_k(x,y) |\,dx \right] |a_Q(y)|\,dy
        \\
        &\le \frac{C}{3}\left(\sum_{0\le k\le j} 2^{k-j}+\sum_{k> j} 2^{j-k}\right) \le C,
\end{align*}
with $C$ independent of $a_Q$, as claimed.

\begin{rem}\label{REM:small-Q}
 We note that statements of Propositions \ref{PROP:NQ}
 and \ref{PROP:NQc} remain true if operator $\cF$ satisfies
 assumptions of Theorem \ref{thm:submain} only with
 $m\leq -(n-1)/2$. 
\end{rem}

\section{Proof of Theorem \ref{thm:main2}}
\label{sec:sketch}

A preliminary result to be proven is the following

\begin{prop}[$L^p(\Rn)$-boundedness of localised 
Fourier integral operators]
\label{prop:dyadicFIO} 
        Assume the hypotheses in Theorem 
        \ref{thm:main2} and let
        $\tp\in\cC^\infty_0(\Rn)$ be supported in 
        the shell $2^{-2}\leq|x|\leq 2^2$. Then
        we have, for $k\geq1$,
        \[
                \|\tp(2^{-k}x)Af\|_{L^p}\leq C\|f\|_{L^p},
        \]
        where the constant $C$ depends only on $\tp$, on
        upper bounds for a finite number of the 
        constants in the estimates satisfied by
        $a$ and $\varphi$, and on the lower bound $\delta$ for the
        determinant of the mixed Hessian of $\varphi$.
\end{prop}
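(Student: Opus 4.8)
The plan is to deduce this estimate from the local $L^p$ bound of Seeger, Sogge and Stein \cite{SSS91} by a rescaling in the $x$‑variable which turns the shell $|x|\sim 2^k$ into a fixed compact set and converts the $SG$‑growth of the data into powers of the large parameter $2^k$.

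First I would regard $\tp(2^{-k}\cdot)A$ as the Fourier integral operator with phase $\varphi(x,\xi)$ and amplitude $\tp(2^{-k}x)a(x,\xi)$, supported in $\{|x|\sim 2^k\}$, and perform the change of variables $x=2^kz$, obtaining
\[
\|\tp(2^{-k}\cdot)Af\|_{L^p}=2^{kn/p}\,\Big\|\,z\mapsto\tp(z)\int_{\R^n}e^{i\varphi(2^kz,\xi)}a(2^kz,\xi)\widehat f(\xi)\,d\xi\,\Big\|_{L^p}.
\]
Since $\jp{2^kz}\sim 2^k$ on $\supp\tp$ for $k\ge1$, setting $\psi_k(z,\xi):=2^{-k}\varphi(2^kz,\xi)$ I would check, using \eqref{eq:phf} and the $SG$‑estimates for $\varphi$, that the $\psi_k$ form a bounded family of non‑degenerate phases of order one, homogeneous of degree one in $\xi$ (in particular $|\det\partial_z\partial_\xi\psi_k|=|\det(\partial_x\partial_\xi\varphi)(2^kz,\xi)|\ge\delta$ and $|\partial_z^\alpha\partial_\xi^\beta\psi_k|\lesssim\jp{\xi}^{1-|\beta|}$, uniformly in $k$, near $\supp\tp$), while $\tp(z)a(2^kz,\xi)=2^{km}c_k(z,\xi)$ with $\{c_k\}$ bounded in $S^\mu_{1,0}$, compactly supported in $z$ and (after the usual reduction to $\supp a\subset\{|\xi|\ge8\}$) supported in $\{|\xi|\ge8\}$. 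All constants involved depend only on $\tp$, on finitely many of the constants in the estimates for $a$ and $\varphi$, and on $\delta$. The gain of the rescaling is that the $x$‑growth of the amplitude has been compressed into the scalar $2^{km}$ and the $x$‑growth of the phase into the large parameter $2^k$.

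The essential point is now the following large‑parameter form of \cite{SSS91}: if $\Psi$ ranges over a bounded family of non‑degenerate, order‑one phases, homogeneous of degree one in $\xi$, and $c\in S^\mu_{1,0}$ is compactly supported in $x$ and supported in $\{|\xi|\ge1\}$ with $\mu\le-(n-1)|1/p-1/2|$, then, for $\Lambda\ge1$,
\[
\Big\|\int_{\R^n}e^{i\Lambda\Psi(\cdot,\xi)}c(\cdot,\xi)\widehat f(\xi)\,d\xi\Big\|_{L^p}\lesssim\Lambda^{-n/p+(n-1)|1/p-1/2|}\,\|f\|_{L^p}.
\]
I would prove this by the dilation $\xi\mapsto\Lambda\eta$: by homogeneity the phase becomes $\Psi(\cdot,\eta)$ and the amplitude becomes $c(\cdot,\eta/\Lambda)$, which lies in $\Lambda^{-\mu}S^\mu_{1,0}$ and is supported in the high‑frequency region $|\eta|\gtrsim\Lambda$. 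Decomposing it into dyadic frequency pieces $|\eta|\sim 2^j$, $2^j\gtrsim\Lambda$ — each piece being $\Lambda^{-\mu}2^{j\mu}$ times a normalised $S^0_{1,0}$ symbol at frequency $2^j$ — and using the Seeger--Sogge--Stein estimate for a single dyadic block (whose $L^p$ operator norm is $\lesssim\Lambda^{-\mu}2^{j\mu}\cdot 2^{j(n-1)|1/p-1/2|}$), the series in $j$ converges precisely because $\mu+(n-1)|1/p-1/2|\le0$ and sums to $\lesssim\Lambda^{(n-1)|1/p-1/2|}$; at the endpoint $\mu=-(n-1)|1/p-1/2|$, where the series is only logarithmic, I would instead apply the full Seeger--Sogge--Stein $L^p$ theorem to $\Lambda^{\mu}c(\cdot,\cdot/\Lambda)\in S^\mu_{1,0}$ post‑composed with the Littlewood--Paley projection onto $|\eta|\gtrsim\Lambda$, reaching the same bound. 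Undoing the dilation, and taking into account the $L^p$‑normalisation of $f$ it introduces, yields the displayed estimate.

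Combining the two steps with $\Lambda=2^k$ and amplitude $2^{km}c_k$,
\[
\|\tp(2^{-k}\cdot)Af\|_{L^p}\lesssim 2^{kn/p}\cdot 2^{km}\cdot 2^{k(-n/p+(n-1)|1/p-1/2|)}\,\|f\|_{L^p}=2^{k(m+(n-1)|1/p-1/2|)}\,\|f\|_{L^p},
\]
which is $\le C\|f\|_{L^p}$ uniformly in $k\ge1$ exactly because $m\le-(n-1)|1/p-1/2|$; note that $\mu\le-(n-1)|1/p-1/2|$ was used only to ensure the convergence of the dyadic sum above. Tracking the constants through the argument gives the dependence of $C$ asserted in the statement. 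I expect the main obstacle to be the large‑parameter refinement of \cite{SSS91} together with the treatment of the critical exponent $\mu=-(n-1)|1/p-1/2|$; the verification that $\{\psi_k\}$ and $\{c_k\}$ are uniformly bounded in the relevant symbol classes is routine and is where the structural hypotheses \eqref{eq:phf} on $\varphi$ and the $SG$‑decay of $a$ enter.
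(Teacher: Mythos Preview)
Your argument is correct and, once unpacked, coincides with the paper's proof; the difference is only in the organisation. The paper conjugates $\tp(2^{-k}\cdot)A$ by the dilation $U_\lambda f=f(\lambda\cdot)$, writing $\tp(2^{-k}\cdot)A=U_{2^{-k}}A'_kU_{2^k}$ with
\[
A'_kf(x)=\int e^{i\varphi(2^kx,2^{-k}\xi)}\tp(x)\,a(2^kx,2^{-k}\xi)\,\widehat f(\xi)\,d\xi,
\]
and then checks directly that the rescaled phase $\varphi(2^kx,2^{-k}\xi)$ and amplitude $\tp(x)a(2^kx,2^{-k}\xi)$ satisfy the Seeger--Sogge--Stein hypotheses uniformly in $k$ (the amplitude lands in $S^{m_p}_{1,0}$, $m_p=-(n-1)|1/p-1/2|$, by a short case split $|\xi|\lessgtr 2^k$). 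You instead rescale $x$ first, produce a phase $2^k\psi_k$ with a large parameter, and then remove that parameter via the $\xi$-dilation $\eta=\Lambda\xi$; but $\varphi(2^kx,2^{-k}\xi)=2^{-k}\varphi(2^kx,\xi)=\psi_k(x,\xi)$ by homogeneity, so your two successive rescalings are exactly the paper's single symplectic rescaling, and your rescaled amplitude $c_k(z,\eta/\Lambda)$ is (up to the factor $2^{km}$) the paper's $\tp(z)a(2^kz,2^{-k}\eta)$.

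One remark on efficiency: the dyadic-in-$j$ summation you use to prove the large-parameter estimate is unnecessary. As you yourself observe for the endpoint $\mu=m_p$, the symbol $\Lambda^{m_p}c(\cdot,\cdot/\Lambda)$ lies in $S^{m_p}_{1,0}$ uniformly (this holds for every $\mu\le m_p$, not only at the endpoint), so a single application of \cite{SSS91} already gives $\|\int e^{i\Psi}c(\cdot,\cdot/\Lambda)\widehat{f_\Lambda}\|_{L^p}\lesssim\Lambda^{-m_p}\|f_\Lambda\|_{L^p}$ and hence the displayed bound $\Lambda^{-n/p+(n-1)|1/p-1/2|}\|f\|_{L^p}$. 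Doing this from the outset avoids the case distinction and matches the paper's direct route.
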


\begin{proof}
        We can write
        \[
                \tp(2^{-k}x)A=U_{2^{-k}} A'_k U_{2^{k}},
        \]
        where $U_\lambda f(x)=f(\lambda x)$, $\lambda\not=0$, is the dilation operator and
        \[
                A'_k f(x)=\int_\Rn e^{i\varphi(2^k x,2^{-k}\xi)}
        \tp(x) a(2^k x,2^{-k}\xi)\widehat{f}(\xi)\,d\xi.
    \]
   Hence it suffices to prove the desired conclusion with $A'_k$ in place
        of $\tp(2^{-k}x)A$. It follows from the estimates satisfied by $\varphi$ and the
        fact that $|x|\sim1$ on the support of $\tp$ that, there,
        \[
                |\partial^\alpha_x\partial^\beta_\xi(\varphi(2^k x,2^{-k}\xi))|\leq
                M_{\alpha,\beta}|\xi|^{1-|\beta|},
        \]
        (in fact, $\langle 2^{k}x\rangle\sim 2^k$ on the support of $\tp$).
        Moreover, we immediately have
        \begin{equation}
                \left|{\rm det}\,
                        \left(\frac{\partial^2(\varphi(2^k x,2^{-k}\xi))}{\partial \xi_j\partial x_l}\right)\right|>\delta>0.
        \end{equation}

        \noindent
        Similarly, one sees that\footnote{Precisely, to verify this last estimate, distinguish the case
        $|\xi|\leq 2^k$ (which implies $\langle 2^{-k}\xi\rangle\sim1$, 
        $\langle\xi\rangle\prec2^k \Rightarrow 
        1\prec 2^{k(-m_p+|\beta|)}\langle\xi\rangle^{m_p-|\beta|}$) and the case $|\xi|\geq 2^k$
        (which implies $\langle 2^{-k}\xi\rangle\sim|2^{-k}\xi|$, $\langle\xi\rangle\sim|\xi|$).
        }, on the support of $\tp$, we have
        \begin{align*}
                |\partial^\alpha_x\partial^\beta_\xi(a(2^k x,2^{-k}\xi))|&=
                2^{k(|\alpha|-|\beta|)}|(\partial^\alpha_x\partial^\beta_\xi a)(2^k x,2^{-k}\xi)|
                \\
                &\leq
                C_{\alpha,\beta} 2^{k(|\alpha|-|\beta|)}
                \norm{2^k x}^{m-|\alpha|}\norm{2^{-k}\xi}^{\mu-|\beta|}
                \\
                &\leq
                C_{\alpha,\beta} 2^{k(|\alpha|-|\beta|)}
                \norm{2^k x}^{m_p-|\alpha|}\norm{2^{-k}\xi}^{m_p-|\beta|}
                \\
                &\leq
                C_{\alpha,\beta} 2^{k(|\alpha|-|\beta|+m_p-|\alpha|-m_p+|\beta|)}
                \norm{\xi}^{m_p-|\beta|}
                \\
                &=C_{\alpha,\beta} \norm{\xi}^{m_p-|\beta|},
        \end{align*}
        where we have set $m_p=-(n-1)\left|\dfrac{1}{p}-\dfrac{1}{2}\right|\ge m,\mu$. 

        \noindent
        We have then showed that the operators $A'_k$ satisfy the assumptions of Seeger-Sogge-Stein's 
        Theorem, uniformly with respect to $k\in\N$: an application of that theorem concludes the
        proof\footnote{Indeed, it suffices to observe that the amplitudes of the $A^\prime_k$, $k\in\N$, are compactly    supported and all the other requirements of the Seeger-Sogge-Stein's Theorem are fulfilled; moreover, the   constant in the boundedness estimate of the aformentioned Theorem depends only on upper 
        bounds for a finite number of the constants in the estimates satisfied by
        the phase and amplitude functions, and a lower bound for the mixed Hessian of the phase.}.
\end{proof}

We then make use of a Littlewood--Paley partition 
of unity $\{ \psi_k \}$, $k\in\Z_+$, such that
$\psi_0\in C_0^\infty(\R^n)$, $\psi_k(x)=\psi(2^{-k}x)$, $k\ge 1$, $\supp \psi 
\subset\{x\in\Rn\colon 2^{-1}\le|x|\le2\}$,
and write the operator $A$ of \eqref{EQ:fio2} as
\begin{equation}
        \label{eq:dyadic}
        A=\psi_0 A+\sum_{k=1}^\infty\psi_k A.
\end{equation}
The operator $\psi_0 A$ is $L^p$-bounded by the 
Seeger-Sogge-Stein's theorem \cite{SSS91},
so we only treat the second term in 
\eqref{eq:dyadic}, namely, the sum over $k\geq1$, writing
   \[
        \sum_{k=1}^\infty\psi_k
        A= \sum_{k=1}^\infty\sum_{k'=0}^\infty\psi_{k}
        A\psi_{k'}.
  \]
        The functions $\psi_k$, $k\ge 1$, can be 
        interpreted as SG pseudo-differential operators,
   so that it is possible to use the composition 
   formulae of a SG Fourier integral operator 
   with a SG pseudo-differential
   operator, see \cite{Co99} or
   \cite{RS06b, RS07}. Splitting the asymptotic 
   expansion of the amplitude of the composed operator
   into the sum of the terms from order $(m,\mu)$ 
   to order $(m-3,\mu-3)$ and of the corresponding remainder,
   we write
   \begin{equation}\label{eq:pl}
        \psi_k A\psi_{k'}=A_{k,k'}+2^{-k-k'}R_{k,k'}.
   \end{equation}
Actually, we can compose the operators in \eqref{eq:pl} on the left
    with the multiplication by $\tp_k(x):=\tp(2^{-k}x)$,
     and on the right with the multiplication by $\tp_{k'}(x)$, 
     for a suitable cut off
    $\tp$, so that $\tp_k\psi_k=\psi_k$. 
    This does not affect the left-hand side and we find
    \[
        \psi_k A\psi_{k'}=\tp_kA_{k,k'}\tp_{k'}
        +2^{-k-k'}\tp_kR_{k,k'}\tp_{k'},
   \]
   with Fourier integral operators 
   $A_{k,k'}$ and $R_{k,k'}$, with amplitudes in 
   $S^{m,\mu}$ and  
   in $S^{m,\mu-2}$, respectively 
   ({\it uniformly with respect to} $k,k'$). 
   Note also that, in view of the properties of the
   Littlewood-Paley partition of unity and the formula 
   for the asymptotic expansion of the amplitude of the 
   composition of a pseudo-differential operator and a 
   Fourier integral operator,
    $|k-k'|>N$ implies $A_{k,k'}\equiv 0$, for some fixed $N>0$.
   Proposition \ref{prop:dyadicFIO} applied with 
   $A_{k,k'}$ in place of $A$
         and $\tp_{k^\prime}f$ in place of $f$, 
         together with the properties of
         the dyadic decomposition $\{\psi_k \}$, 
         $k\in \Z_+$, gives the desired estimate for the operator
         $\displaystyle \sum_{k=1}^\infty\sum_{k'=0}^\infty 
         \tp_kA_{k,k'}\tp_{k'}$:
         \begin{align*}
                \Big\|\sum_{k=1}^\infty&\sum_{k'\geq0,|k'-k|\leq N} \tp_k
                A_{k,k'}\tp_{k'}f\Big\|_{L^p}^p\prec
                \sum_{k=1}^\infty\Big\|\sum_{k'\geq0, |k'-k|\leq N} \tp_k
                A_{k,k'}\tp_{k'}f\Big\|_{L^p}^p
        \\
                        &\prec
                \sum_{k=1}^\infty\sum_{k'\geq0,|k'-k|\leq N}\|
                \tp_k
                A_{k,k'}\tp_{k'}f\|_{L^p}^p
        \\
                &\prec\sum_{k=1}^\infty\sum_{k'\geq0,|k'-k|\leq N}\|\tp_{k'}f\|_{L^p}^p
                        \le(2N+1)\sum_{k^\prime=0}^\infty\|\tp_{k'}f\|_{L^p}^p\prec\|f\|_{L^p}^p,
        \end{align*}
         where we used $\displaystyle\sum_{k^\prime=0}^\infty\|\tp_{k^\prime} f\|_{L^p}^p\prec \|f\|_{L^p}^p$,
         $\displaystyle\Big\|\sum_{k=1}^\infty \tp_k u_k\Big\|_{L^p}^p\prec\sum_{k=1}^\infty \|\tp_k u_k\|_{L^p}^p$,
         which hold for arbitrary $f,u_k\in L^p(\R^n)$, $k\ge 1$. A similar argument allows to estimate
        \[
                \|\sum_{k=1}^\infty\sum_{k'=0}^\infty 2^{-k-k'}\tp_{k}R_{k,k'}\tp_{k'}f\|_{L^p}
                \leq\sum_{k=1}^\infty\sum_{k'=0}^\infty2^{-k-k'}\|\tp_{k}R_{k,k'}\tp_{k'}f\|_{L^p}.
        \]
        Indeed, again by Proposition \ref{prop:dyadicFIO} 
        applied with $R_{k,k'}$ in place of $A$,
        and $\tp_{k^\prime}f$ in place of $f$, we see 
        that the right hand side is
        \[
                \prec\sum_{k=1}^\infty\sum_{k'=0}^\infty 2^{-k-k'}\|\tp_{k'}f\|_{L^p}=
                \sum_{k'=0}^\infty 2^{-k'}\|\tp_{k'}f\|_{L^p},
        \]
        and, by an application of H\"older's inequality, 
        the last expression is dominated by
        \[
                \prec\left(\sum_{k'=0}^\infty
                \|\tp_{k'}f\|^p_{L^p}\right)^{1/p}\prec \|f\|_{L^p}.
        \]

\section{Acknowledgements}
The authors would like to thank Fabio Nicola and 
Luigi Rodino for fruitful conversations and comments.

%\appendix
%\section{Appendix: \dots}
%\label{sec:app}
%\setcounter{equation}{0}
%\input{.tex}

%\addcontentsline{toc}{section}{References}
%\section*{References}
%\input{.tex}

%\bibliographystyle{plain}
%\bibliography{}

%\begin{thebibliography}{A12}
%\end{thebibliography}

\end{document}